\newcommand{\F}{\mathbb{F}}
\newcommand{\Q}{\mathbb{Q}}
\newcommand{\R}{\mathbb{R}}
\newcommand{\rhobar}{{\overline{\rho}}}
\newcommand{\calO}{\mathcal{O}}
\newcommand{\Fq}{\mathfrak{q}}
\DeclareMathOperator{\sgn}{sgn}
\newcommand{\GL}{\operatorname{GL}}
\newcommand{\legendre}[3][]{\left(\frac{#2}{#3}\right)_{#1}}
\newcommand{\legendreL}[2]{\legendre[L]{#1}{#2}}
\newcommand{\inftyS}[1]{S_{#1}^\infty}
\newcommand{\evenS}[1]{S_{#1}^{\mathrm{even}}}
\newcommand{\oddS}[1]{S_{#1}^{\mathrm{odd}}}
\numberwithin{equation}{section}
\newtheorem{theorem}[equation]{Theorem}
\newtheorem{lemma}[equation]{Lemma}
\newtheorem{corollary}[equation]{Corollary}
\newtheorem{proposition}[equation]{Proposition}
\theoremstyle{definition}
\newtheorem{definition}[equation]{Definition}
\theoremstyle{remark}
\newtheorem{remark}[equation]{Remark}
\definecolor{darkgreen}{rgb}{0,0.5,0}
\title{Fermat's Last Theorem over $\Q(\sqrt{5})$ and $\Q(\sqrt{17})$}
\author{Imin Chen}
\address{Department of Mathematics, Simon Fraser University\\
Burnaby, BC V5A 1S6, Canada } \email{ichen@sfu.ca}
\author{Aisosa Efemwonkieke}
\address{Department of Mathematics, Simon Fraser University\\
Burnaby, BC V5A 1S6, Canada } \email{aisosa\_efemwonkieke@sfu.ca}
\author{David Sun}
\address{Department of Mathematics, Simon Fraser University\\
Burnaby, BC V5A 1S6, Canada } \email{david\_sun\_2@sfu.ca}
\date{\today}
\thanks{This work was supported by a NSERC Discovery Grant (I.C.) and NSERC USRA (A.E.).}
\begin{document}

\begin{abstract}
  We prove Fermat's Last Theorem over $\Q(\sqrt{5})$ and $\Q(\sqrt{17})$ for prime exponents $p \ge 5$ in certain congruence classes modulo $48$ by using a combination of the modular method and Brauer-Manin obstructions explicitly given by quadratic reciprocity constraints. The reciprocity constraint used to treat the case of $\Q(\sqrt{5})$ is a generalization to a real quadratic base field of the one used by Chen-Siksek. For the case of $\Q(\sqrt{17})$, this is insufficient, and we generalize a reciprocity constraint of Bennett-Chen-Dahmen-Yazdani using Hilbert symbols from the rational field to certain real quadratic fields.
\end{abstract}

\subjclass[2010]{Primary: 11D41; Secondary: 11F80, 11F03}

\maketitle

\tableofcontents

\section{Introduction}

The celebrated Fermat's Last Theorem was proven in \cite{Wiles} \cite{Taylor-Wiles} by Wiles and Taylor-Wiles. Ever since then, it has been natural to attempt to use the same methods to tackle more general forms of the Fermat equation, particularly the still unresolved Beal conjecture \cite{beal}.

Another direction has been to study the usual Fermat equation over number fields. For instance, Freitas and Siksek considered in \cite{freitas-siksek-1} and \cite{freitas-siksek-2}, Fermat's Last Theorem over real quadratic fields. In particular, it is currently known that Fermat's Last Theorem is asymptotically true for $K = \Q(\sqrt{d})$ where $2 \le d \le 23$ squarefree and $d \not= 5, 17$ and also on a subset of $d$ which has density $5/6$ among squarefree $d > 0$. For a given number field $K$, asymptotically true means that Fermat's Last Theorem is true for sufficiently large exponents. In a different vein, \cite{FKS-1} \cite{FKS-2} \cite{FKS-3} establish more general criteria for proving the asymptotic Fermat's Last Theorem, and apply these to a number of infinite families of number fields.

In this paper, we study Fermat's Last Theorem over $K$ for $d = 5$ and $d = 17$, the first two notable cases where asymptotic results are not yet proven. These cases present difficulties within the framework of \cite{freitas-siksek-1} due to a large number of solutions to the $S$-unit equation over $K$ where $S$ is the set of primes of $K$ above $2$. We circumvent these obstructions by showing the reciprocity constraints in \cite{chen-siksek} \cite{BCDY} generalize to a real quadratic base field, allowing a complete resolution in certain congruence classes of prime exponents $p$. In addition to \cite{chen-siksek} \cite{BCDY}, we also mention \cite{deconinck} \cite{ibrahim} where reciprocity constraints have been used to solve generalized Fermat equations.

Let $K$ be a quadratic field and $\calO_K$ its ring of integers. Assume that $K$ has class number one. We say that a solution $(a,b,c)$ over $\calO_K$, i.e.\ where $a,b,c \in \calO_K$, to
\begin{equation}
    \label{fermat-equation}
    x^p + y^p + z^p = 0 
\end{equation}
is primitive if the ideal $(a,b,c) = \calO_K$ and non-trivial if $abc \not= 0$. 

Under the assumption that $K$ has class number one, we note that any non-zero solution $(a,b,c) \in K^3$ to \eqref{fermat-equation} can be scaled to a primitive solution over $\calO_K$, and henceforth, when we refer to solutions over $K$, we mean that the solutions have been scaled to a primitive solution over $\calO_K$. In the more general case that $K$ does not have class number one, we refer the reader to \cite{freitas-siksek-1}.

\begin{theorem}
\label{main-5}
There are no non-trivial primitive solutions over $\Q(\sqrt{5})$ to 
\begin{equation*}
    x^p + y^p + z^p = 0 
\end{equation*}
for prime $p \ge 5$ if $p$ satisfies one of the following:
\begin{enumerate}
    \item $p \equiv 5,7 \pmod 8$, or
    \item $p \equiv 19, 41 \pmod {48}$.
\end{enumerate}
\end{theorem}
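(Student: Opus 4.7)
The strategy is to combine the modular method over the totally real field $K = \Q(\sqrt{5})$ with an explicit reciprocity constraint generalizing Chen-Siksek. Suppose $(a,b,c)\in\calO_K^3$ is a hypothetical non-trivial primitive solution to \eqref{fermat-equation}. I would attach the Frey elliptic curve $E = E_{a,b,c}/K$ of Freitas-Siksek. By modularity of elliptic curves over real quadratic fields and the known irreducibility results for $\rhobar_{E,p}$ when $p\ge 5$, Hilbert modular level-lowering produces a Hilbert newform $\mathfrak{f}$ of parallel weight $2$ with $\rhobar_{E,p}\cong\rhobar_{\mathfrak{f},\mathfrak{P}}$ for some prime $\mathfrak{P}\mid p$ of the coefficient ring of $\mathfrak{f}$, whose level divides $\mathfrak{q}^r$ for an explicit small exponent $r$; here $\mathfrak{q}$ is the unique prime of $\calO_K$ above $2$, which is inert in $K$ with residue field $\F_4$.

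The space of such Hilbert newforms has been computed in \cite{freitas-siksek-1}. Standard comparisons of Hecke eigenvalues at a handful of auxiliary primes eliminate all classes except those attached to ``exceptional'' Frey curves $E'$ arising from solutions of an $S$-unit equation over $\calO_K$ with $S = \{\mathfrak{q}\}$. It is precisely the abundance of such $S$-unit solutions that prevents the Freitas-Siksek asymptotic argument from covering the case $d=5$, so the remaining task is to rule out the induced congruence $\rhobar_{E,p}\cong\rhobar_{E',p}$ for the stated congruence classes of $p$. Following Chen-Siksek, one uses this congruence together with the Tate parametrizations of $E$ and $E'$ at $\mathfrak{q}$ (both Frey curves have multiplicative reduction there) to produce an identity of the form $A\equiv B\alpha^p \pmod{\mathfrak{P}}$ in $\calO_K$, where $A$ and $B$ are explicit expressions in $(a,b,c)$ and $(a',b',c')$ and $\alpha\in\calO_K^\times$. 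Pairing both sides with the Hilbert symbol at $\mathfrak{q}$ (for case (1)), or at $\mathfrak{q}$ together with a chosen prime above $3$ (for case (2)), and applying quadratic reciprocity in $K$ yields a symbol identity that reduces to a purely numerical condition on $p$. One then verifies directly that this condition fails when $p\equiv 5,7\pmod 8$ in case (1), and when $p\equiv 19,41\pmod{48}$ in case (2), giving the desired contradiction.

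The principal obstacle will be to check that the Chen-Siksek reciprocity argument, originally formulated over $\Q$ where the prime $2$ has residue field $\F_2$, goes through over the real quadratic base $K$ where $\mathfrak{q}$ has residue field $\F_4$. Concretely, one must carry out the local analysis of the $p$-torsion of both Frey curves at $\mathfrak{q}$ with care, evaluate the Hilbert symbols in $K_{\mathfrak{q}}$ explicitly on the elements appearing in the above congruence, and apply the version of quadratic reciprocity for $K$ in a way that packages the local data into the precise congruence conditions on $p$ modulo $8$ and modulo $48$ stated in the theorem. The passage from condition (1) to condition (2) should amount to augmenting the single-prime reciprocity input by a second Hilbert symbol at the prime above $3$, refining the obstruction beyond what the $\mathfrak{q}$-contribution alone provides.
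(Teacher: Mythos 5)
Your high-level strategy (modularity, level lowering, then a Chen--Siksek-type reciprocity constraint to kill the surviving form) matches the paper, but the central mechanism you propose is not the one that works, and as described it would fail. You assert that both the Frey curve and the obstructive curve $E'$ have multiplicative reduction at the prime $\mathfrak{q}$ above $2$ and propose to compare Tate parameters there to get a congruence $A\equiv B\alpha^{p}$, pairing with Hilbert symbols at $\mathfrak{q}$. In fact, after level lowering the spaces at levels $\mathfrak{P},\mathfrak{P}^{2}$ vanish, which forces $2\nmid abc$ and conductor exponent $3$ at $\mathfrak{P}$; the unique surviving newform corresponds to an elliptic curve of conductor $\mathfrak{P}^{3}$. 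Both curves therefore have \emph{additive} reduction at $\mathfrak{P}$, so there is no Tate parametrization to exploit, and the paper explicitly remarks that inertia/$2$-adic arguments at $\mathfrak{P}$ cannot succeed because the obstructive triples realize varying $2$-adic conditions. The prime $2$ enters the real argument only through the parity $2\nmid abc$, which guarantees each of $a,b,c$ is coprime to $2\calO_K$ so that the reciprocity theorem applies to all permutations.

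The actual constraint is a global quadratic-reciprocity statement about the putative solution itself, not a comparison of the two curves at $2$: from the identity $(a^{p}-b^{p})^{2}=-4(ab)^{p}+c^{2p}$ one shows $1-4\zeta_{r}$ is a square modulo $4(ab-c^{2}\zeta_{r}')$ in $L=K(\zeta_{r})$, and (using that $ab-c^{2}$ is totally negative for a real Fermat solution, with $K$ having an even number of real places) concludes $\legendre[L]{ab-c^{2}\zeta_{r}'}{1-4\zeta_{r}}\neq -1$. The isomorphism $\rhobar_{E_0,p}\cong\rhobar_{E,p}$ is used only to force $a_{\mathfrak{q}_3}(E_0)=a_{\mathfrak{q}_3}(E)$ (and $a_{\mathfrak{q}_7}$ for the mod $48$ case), i.e., to constrain $(a,b,c)$ modulo $3$ (resp.\ modulo $21$). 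The conclusion is then a finite, computer-verified check that no triple in $(k^{\times})^{3}$, $k=\calO_K/3\calO_K$ or $\calO_K/21\calO_K$, satisfies both the trace conditions and the reciprocity inequality for all permutations when $p$ lies in the stated classes (the dependence on $p$ enters through the exponent $R\equiv p^{-1}\bmod \#k^{\times}$ and the choice of $\zeta_{r}'=\zeta_{r}^{R}$); it is not a ``purely numerical condition on $p$.'' Likewise the mod $48$ refinement comes from working over $L=K(\zeta_{3})$ with the symbol $\legendre[L]{ab-c^{2}\zeta_{3}'}{1-4\zeta_{3}}$ and mod $21$ data, not from adjoining a Hilbert symbol at a prime above $3$ to one at $2$. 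To repair your argument you would need to replace the Tate-parameter step with this identity-plus-reciprocity derivation and the accompanying finite search.
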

The above theorem shows that Fermat's Last Theorem over $\Q(\sqrt{5})$ is true for a set of prime exponents with Dirichlet density $5/8$. In \cite{kraussqrt5}, Theorem~\ref{main-5} is proven for $5 \le p < 10^7$, where we note the small exponents in this range rely on \cite{hao-parry}. The method used in \cite{kraussqrt5} is to fix a prime exponent $p \ge 11$ and use an auxiliary prime $q = pk+1$  with $k < p-2$ and $q$ split in $K$, and apply the modular method together with the local condition mod $q$ on solutions to \eqref{fermat-equation}. In practice, it appears most primes $q$ of this form succeed in giving a proof for the fixed exponent $p$, but there is no currently known way to prove results of this type in general.

\begin{theorem}
\label{main-17}
There are no non-trivial primitive solutions over $\Q(\sqrt{17})$ to 
\begin{equation*}
    x^p + y^p + z^p = 0 
\end{equation*}
for prime $p \ge 5$ if $p \equiv 5, 7 \pmod 8$.
\end{theorem}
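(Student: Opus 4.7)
The plan is to follow the modular method over $K = \Q(\sqrt{17})$ in the spirit of the $\Q(\sqrt{5})$ case, but to replace the Chen-Siksek-type reciprocity (which, as the introduction notes, is insufficient here because $17 \equiv 1 \pmod 8$ and so $2\calO_K$ splits, producing too many $S$-unit solutions) by the Hilbert-symbol reciprocity generalized from \cite{BCDY}.

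First I would assume for contradiction that $(a,b,c) \in \calO_K^3$ is a non-trivial primitive solution. Writing $2\calO_K = \mathfrak{p}_1 \mathfrak{p}_2$, I would permute and scale $a,b,c$ so that the $\mathfrak{p}_i$-valuations of $abc$ lie in a standard normalized form, and then attach the Frey elliptic curve $E = E_{a,b,c}:Y^2 = X(X-a^p)(X+b^p)$. Modularity of $E$ over $K$ follows from Freitas-Le Hung-Siksek, and absolute irreducibility of $\bar\rho_{E,p}$ for $p \ge 5$ follows from the real-quadratic Mazur-type results of Freitas-Siksek, so no small exceptional primes need to be ruled out beyond the standing hypothesis $p \ge 5$.

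Next I would apply Fujiwara-Jarvis level lowering to obtain $\bar\rho_{E,p} \cong \bar\rho_{g,\mathfrak{P}}$ for some Hilbert newform $g$ of parallel weight $2$ over $K$ of level $\calN$ supported only on $\mathfrak{p}_1, \mathfrak{p}_2$. Using the recipes of Freitas-Siksek this produces a short, explicit list of possible levels, and I would enumerate the Hilbert newforms at those levels by computer algebra. Newforms $g$ with strictly larger coefficient field are killed in the standard way by comparing $a_{\mathfrak{q}}(g)$ against the possible traces of Frobenius of $E$ at a small auxiliary prime $\mathfrak{q}$. What remains is a finite list of newforms corresponding to elliptic curves $E'/K$ with full $2$-torsion and good reduction outside $\{\mathfrak{p}_1,\mathfrak{p}_2\}$.

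The heart of the argument is eliminating these surviving $E'$. Here I would use the hypothesis $p \equiv 5, 7 \pmod 8$ to pin down the shape of the image of inertia at $\mathfrak{p}_1$ and $\mathfrak{p}_2$ under $\bar\rho_{E,p}$, and then convert the isomorphism $\bar\rho_{E,p} \cong \bar\rho_{E',p}$ into a constraint on Hilbert symbols $(\alpha,\beta)_{\mathfrak{q}}$ for $\alpha,\beta \in K^\times$ built from the putative solution and from an explicit model of $E'$. Summing these symbols over all places of $K$ via Hilbert reciprocity gives a single global relation which, when computed on the Frey side and on the $E'$ side, will yield values that cannot agree for any surviving $E'$, producing the required contradiction.

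The main obstacle will be this last step: transporting the rational Hilbert-symbol manipulations of \cite{BCDY} to $K = \Q(\sqrt{17})$ requires careful accounting at the two real places, at the two split primes above $2$, and at the auxiliary prime $\mathfrak{q}$, with the congruence $p \equiv 5, 7 \pmod 8$ being precisely what forces the resulting local symbols to collapse to a tractable dichotomy. Once this generalized reciprocity is set up, the elimination of each candidate $E'$ reduces to a finite local-symbol computation.
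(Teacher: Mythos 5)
Your high-level skeleton matches the paper's: Frey curve $E_0: Y^2=X(X-a^p)(X+b^p)$, modularity via Freitas--Le Hung--Siksek, irreducibility, level lowering to level $\mathfrak{P}_1\mathfrak{P}_2$, a unique Hilbert newform there corresponding to an explicit elliptic curve $E/K$, and elimination via a Hilbert-symbol reciprocity constraint generalized from \cite{BCDY}. However, the decisive elimination step is where your proposal has a genuine gap, because the mechanism you describe is not the one that works. First, the Hilbert-symbol constraint is \emph{not} built from the isomorphism $\rhobar_{E_0,p}\cong\rhobar_{E,p}$ or from a model of $E$: it is extracted purely from the algebraic identity $(a^p-b^p)^2 - c^{2p}(1-4\zeta_r) = -4\left((ab)^p - \zeta_r c^{2p}\right)$, which exhibits $1-4\zeta_r$ as a square modulo $4(ab-c^2\zeta_r')$ and, after applying Hilbert reciprocity over $L=K(\zeta_r)$ and controlling the archimedean and even places, yields $\left(\frac{ab-c^2\zeta_r'}{1-4\zeta_r}\right)_{L}\neq -1$ for all permutations of $a,b,c$. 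The representation isomorphism is used only to force $a_{\mathfrak{q}}(E_0)=a_{\mathfrak{q}}(E)$ at the auxiliary primes $\mathfrak{q}_3=3\calO_K$ and $\mathfrak{q}_7=7\calO_K$ (after bounding $p$ when this fails), which pins down $(a,b,c)$ modulo $3$ or $21$.

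Second, the congruence $p\equiv 5,7\pmod 8$ does not enter through the image of inertia at $\mathfrak{P}_1,\mathfrak{P}_2$. An inertia-at-$2$ (symplectic) argument is precisely what Freitas--Siksek already used to obtain $p\equiv 3,5\pmod 8$; it does not reach $p\equiv 7\pmod 8$, which is the new content here, and the paper explicitly avoids inertia arguments at $2$. Instead, $p\bmod 8$ (resp.\ $p\bmod 384$) is used to invert the $p$-th power map on $(\calO_K/3\calO_K)^\times$ (resp.\ $(\calO_K/21\calO_K)^\times$): one recovers $abc^{-2}$ in the residue ring from $\epsilon=(abc^{-2})^p$ via $\epsilon^R$ with $RR^*\equiv 1$, and a finite {\tt Magma} search then shows that no residue triple satisfies both the trace conditions and the reciprocity inequality for all permutations, precisely when $p\equiv 5,7\pmod 8$. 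Finally, note two further points you would need: since $2$ splits in $\Q(\sqrt{17})$ with residue field $\F_2$, necessarily $2\mid abc$, so the Chen--Siksek constraint (which requires $c$ odd) fails for one permutation --- this is exactly why the strengthened Hilbert-symbol version with no parity hypothesis on $c$ is required; and irreducibility is only invoked for $p\ge 17$, with $5\le p<17$ handled by citing earlier work rather than by a Mazur-type result valid down to $p=5$.
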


\begin{remark}
In \cite{freitas-siksek-1}, Theorem~\ref{main-17} is established for $p \equiv 3, 5 \pmod 8$ using the symplectic method \cite{Halberstadt-Kraus}. The above theorem and the result in \cite{freitas-siksek-1} imply the corollary below, which shows that Fermat's Last Theorem over $K = \Q(\sqrt{17})$ is true for a set of prime exponents with Dirichlet density $3/4$.
\end{remark}

\begin{corollary}
\label{coro-17}
There are no non-trivial primitive solutions over $\Q(\sqrt{17})$ to 
\begin{equation*}
    x^p + y^p + z^p = 0 
\end{equation*}
for prime $p \ge 5$ if $p \not\equiv 1 \pmod{8}$.
\end{corollary}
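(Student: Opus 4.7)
The plan is to deduce Corollary \ref{coro-17} by a direct case analysis that combines the new Theorem \ref{main-17} with the earlier result of Freitas--Siksek recalled in \cite{freitas-siksek-1}. First I would partition the odd primes $p \ge 5$ by their residue modulo $8$, giving the four classes $1, 3, 5, 7$. The goal is to exclude every class except $1$.

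Next I would assemble the two halves. The classes $p \equiv 5, 7 \pmod 8$ are exactly the content of Theorem \ref{main-17}, so nothing further is needed for them. The classes $p \equiv 3, 5 \pmod 8$ are covered by the Freitas--Siksek result in \cite{freitas-siksek-1}, which proceeds via the symplectic argument of Halberstadt--Kraus \cite{Halberstadt-Kraus} as noted in the preceding remark. The union $\{5,7\} \cup \{3,5\} = \{3,5,7\}$ is exactly the complement of $\{1\}$ among odd residues modulo $8$, yielding $p \not\equiv 1 \pmod 8$ as required. The Dirichlet density $3/4$ highlighted in the remark then follows from the standard computation, since each of the four residue classes modulo $8$ contains primes of density $1/4$ and three of the four are covered.

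I do not expect any genuine obstacle in the corollary itself, since the real work has already been discharged upstream in Theorem \ref{main-17} and in \cite{freitas-siksek-1}. The only hygiene checks are that the notion of primitive solution and the lower bound $p \ge 5$ coincide in the two sources; this is automatic, as $\Q(\sqrt{17})$ has class number one (so the primitive-over-$\calO_K$ convention fixed in the introduction matches the one used in \cite{freitas-siksek-1}) and both results are stated for exponents $p \ge 5$.
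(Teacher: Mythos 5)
Your proposal is correct and matches the paper's own argument exactly: the remark preceding the corollary states that it follows by combining Theorem~\ref{main-17} (covering $p \equiv 5, 7 \pmod 8$) with the Freitas--Siksek result for $p \equiv 3, 5 \pmod 8$, and the union of these classes is precisely $p \not\equiv 1 \pmod 8$ among odd primes. Your hygiene checks on the primitivity convention and the exponent bound are sensible but, as you note, automatic.
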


The proof of the above theorems are based on the modular method, which attaches a Frey elliptic curve $E_0$ defined over $K$ to a putative non-trivial primitive solution. One then considers the representation $\rhobar_{E_0,p} : G_K \rightarrow \GL_2(\F_p)$ on the $p$-torsion points of $E_0$, which by virtue of being a Frey elliptic curve, has Artin conductor bounded independently of the solution and prime exponent $p$. 

Using modularity and level lowering, one deduces that $\rhobar_{E_0,p} \cong \rhobar_{f,p}$, where $\rhobar_{f,p} : G_K \rightarrow \GL_2(\F_p)$ is the residual Galois representation attached to a Hilbert newform at the possible Artin conductors. For the cases considered in this paper, $f$ has coefficient field equal to the field of rational numbers.

Unlike the original Fermat's Last Theorem, the space of newforms with level equal to the possible Artin conductor is typically not zero. Hence, to achieve a contradiction, one needs additional methods. To accomplish this, we combine information from reciprocity constraints such as in \cite{chen-siksek} to obtain a contradiction for prime exponents in certain congruence classes. This succeeds for $\Q(\sqrt{5})$, however the reciprocity constraint used in \cite{chen-siksek} is not strong enough to treat $\Q(\sqrt{17})$. 

Using the approach in \cite{BCDY}, valid over $\Q$, we prove a stronger reciprocity constraint, valid over certain real quadratic base fields, in terms of Hilbert symbols. This has an added advantage that the reciprocity law needed is simply the well-known reciprocity law for Hilbert symbols over a number field \cite{Voi20}. 

The programs and output transcripts for the computations needed in this paper are described and posted at \cite{programs}.

\section{Acknowledgements}

We would like to thank Alain Kraus for helpful comments and improvements on an earlier version of this paper, and Nuno Freitas and Samir Siksek for useful comments about the problems studied in this paper.

\section{Proof of Theorem~\ref{main-5}}

Let $K = \Q(\sqrt{5})$ and note that $\calO_K$ has unique factorization. Suppose $(a,b,c)$ is a non-trivial primitive solution over $\calO_K$ to $\eqref{fermat-equation}$ for a prime $p \ge 5$. Normalize the solution $(a,b,c)$ as in \cite{kraussqrt5}.

Let $E_0$ denote the Frey elliptic curve over $K$
\begin{equation}
    E_0: Y^2 = X(X-a^p)(X+b^p).
\end{equation}

Let $\mathfrak{P}$ be the unique prime of $K$ above $2$.

\begin{proposition}
\label{Conductor-even}
  Assuming $p \ge 5$, the conductor of $E_0$ over $K$ is given by
  \begin{equation*}
      N(E_0) = \mathfrak{P}^t \prod_{\mathfrak{q} \mid abc, \mathfrak{q} \not= \mathfrak{P}} \mathfrak{q}
  \end{equation*}
  where $t \in \{1,2,3\}$. Moreover, $t = 1$ if $2 \mid abc$ and $t \in \left\{ 2, 3 \right\}$ if $2 \nmid abc$.
\end{proposition}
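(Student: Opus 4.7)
The plan is to apply Tate's algorithm to the Weierstrass model of $E_0$ one prime at a time. An easy calculation using $a^p + b^p = -c^p$ gives the standard invariants
\[
c_4(E_0) = 16(a^{2p} + a^p b^p + b^{2p}), \qquad \Delta(E_0) = 16(abc)^{2p}.
\]
For any prime $\mathfrak{q}$ of $K$ coprime to $2abc$, $\Delta(E_0)$ is a $\mathfrak{q}$-unit, so $E_0$ has good reduction at $\mathfrak{q}$; this explains why only primes dividing $2abc$ appear in $N(E_0)$.

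For an odd prime $\mathfrak{q} \mid abc$, primitivity of $(a,b,c)$ forces $\mathfrak{q}$ to divide exactly one of $a,b,c$, so two of $a^p, b^p, c^p$ are $\mathfrak{q}$-units. Hence $c_4(E_0)$ is a $\mathfrak{q}$-unit while $v_\mathfrak{q}(\Delta(E_0)) > 0$. The standard criterion then gives multiplicative reduction with conductor exponent $1$, accounting for the odd part of $N(E_0)$.

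The delicate part is the prime $\mathfrak{P}$ above $2$, which I would split into two subcases. If $2 \mid abc$, I would permute so that $\mathfrak{P} \mid b$ and invoke the normalization of $(a,b,c)$ from \cite{kraussqrt5} fixing the residues of $a$ and $c$ modulo suitable powers of $\mathfrak{P}$. A change of variables over $K_{\mathfrak{P}}$ then produces a minimal Weierstrass model whose $c_4$ is a $\mathfrak{P}$-unit, so Tate's algorithm yields multiplicative reduction and $t = 1$. If $2 \nmid abc$, then $\mathfrak{P} \nmid abc$, so $v_{\mathfrak{P}}(\Delta(E_0)) = 4$ and $v_{\mathfrak{P}}(c_4(E_0)) \ge 4$; the curve has additive reduction at $\mathfrak{P}$, and the given model is minimal since $v_{\mathfrak{P}}(\Delta) < 12$. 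Running Tate's algorithm, with casework on whether the image of $a^p/b^p$ in the residue field lies in $\F_2^{\times}$ or in $\F_4 \setminus \F_2$ (the latter being exactly where $a^{2p} + a^p b^p + b^{2p} \equiv 0 \pmod{\mathfrak{P}}$), one reaches Kodaira types that yield $t \in \{2, 3\}$.

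The main obstacle is the Tate's-algorithm bookkeeping at $\mathfrak{P}$: since $\mathfrak{P}$ is inert in $K$ with residue field $\F_4$, the reduction steps must track congruences modulo successive powers of $\mathfrak{P}$ rather than modulo $2$, and they interact nontrivially with the possible $p$-th power residues of $a,b,c$ at $\mathfrak{P}$. This analysis has been carried out for this Frey curve in \cite{kraussqrt5}, and the proof would appeal to the relevant computations there.
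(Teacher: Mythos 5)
Your proposal is correct and ends up in the same place as the paper: the paper's proof of this proposition is simply a citation to Lemme 3 of \cite{kraussqrt5}, which is exactly the reference you defer to for the Tate's-algorithm analysis at $\mathfrak{P}$. Your outline of the underlying computation (the invariants $c_4$ and $\Delta$, multiplicative reduction at odd primes of $abc$, and the case split at the inert prime above $2$ according to whether $a^{2p}+a^pb^p+b^{2p}$ vanishes in $\F_4$) is an accurate description of what that cited lemma establishes.
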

\begin{proof}
  See \cite[Lemme 3]{kraussqrt5}.
\end{proof}

\begin{proposition}
  The representation $\rhobar_{E_0,p} : G_K \rightarrow \GL_2(\F_p)$ is irreducible if $p \ge 11$.
\end{proposition}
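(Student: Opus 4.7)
The approach I would take follows the isogeny-character method of Freitas--Siksek \cite{freitas-siksek-1}, specialized to $K = \Q(\sqrt{5})$. Assume for contradiction that $\rhobar_{E_0,p}$ is reducible. Then $E_0$ admits a $K$-rational subgroup of order $p$, and the semisimplification of $\rhobar_{E_0,p}$ has the form $\chi \oplus \chi'$ with $\chi \chi' = \chi_p$, the mod $p$ cyclotomic character. Equivalently, the pair $(E_0, C)$, where $C$ is the cyclic $p$-subgroup, defines a non-cuspidal $K$-rational point of $X_0(p)$.

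The first step is to constrain the ramification of $\chi$ using Proposition~\ref{Conductor-even}. At each prime $\mathfrak{q} \ne \mathfrak{P}$ dividing $abc$, $E_0$ has multiplicative reduction, so by the theory of the Tate curve $\chi|_{I_\mathfrak{q}}$ is either trivial or equal to $\chi_p|_{I_\mathfrak{q}}$. At primes $\mathfrak{q} \mid p$ not dividing $abc$, $E_0$ has good reduction and results of Raynaud together with the classification of fundamental characters force $\chi|_{I_\mathfrak{q}}$ (after possibly swapping $\chi$ and $\chi'$) to lie in $\{1, \chi_p\}$ at the inertia level. At $\mathfrak{P}$, Proposition~\ref{Conductor-even} leaves a short list of reduction types, each yielding a bounded list of possibilities for $\chi|_{I_\mathfrak{P}}$. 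Combining these facts, $\chi^{12}$ is unramified away from $\mathfrak{P}$.

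Since $K = \Q(\sqrt{5})$ has class number one, class field theory then realizes $\chi^{12}$ as a Hecke character whose conductor is supported at $\mathfrak{P}$, and hence confines $\chi^{12}$ to a finite abelian quotient of bounded order independent of $p$. Picking auxiliary primes $\mathfrak{l}$ of small norm where $E_0$ has good reduction and comparing with the trace $a_\mathfrak{l}(E_0) \in \Z$ (bounded by the Hasse bound) gives the congruence
\[
    \chi(\Frob_\mathfrak{l}) + \Norm(\mathfrak{l})\, \chi(\Frob_\mathfrak{l})^{-1} \equiv a_\mathfrak{l}(E_0) \pmod{p}.
\]
Raising to the twelfth power to eliminate the finite-order ambiguity in $\chi$ produces a polynomial identity in $\Z$ which, for appropriately chosen $\mathfrak{l}$, can be nonzero and hence holds modulo $p$ for only finitely many primes, all strictly smaller than $11$.

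The main obstacle I anticipate is the ramification analysis at $\mathfrak{P}$ when $2 \nmid abc$, where the reduction is only potentially multiplicative (the case $t \in \{2,3\}$ in Proposition~\ref{Conductor-even}); here one must pass to a tame extension to read off the possibilities for $\chi|_{I_\mathfrak{P}}$, and one must verify that the resulting enlarged list of candidate Hecke characters is still ruled out by the auxiliary-prime step uniformly for every $p \ge 11$. I expect the last step to reduce to a short computer check of a handful of primes $\mathfrak{l}$, matching the irreducibility bound already used in \cite{kraussqrt5}.
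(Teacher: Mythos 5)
The paper does not actually prove this proposition; it simply cites \cite[Proposition, p.~7]{kraussqrt5}, so there is no in-house argument to compare against. Your sketch is the standard isogeny-character method (Momose, David, Freitas--Siksek), which is indeed the kind of argument that lies behind the cited result, so the overall strategy is appropriate. However, as written it contains a step that fails: the claim that ``$\chi^{12}$ is unramified away from $\mathfrak{P}$.'' You yourself note that at a prime $\mathfrak{q}\mid p$ of good (ordinary) or multiplicative reduction the restriction $\chi|_{I_\mathfrak{q}}$ may equal $\chi_p|_{I_\mathfrak{q}}$; in that case $\chi^{12}|_{I_\mathfrak{q}}=\chi_p^{12}|_{I_\mathfrak{q}}$, which is nontrivial for every $p\ge 11$ except $p=13$. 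So the conductor of the Hecke character attached to $\chi^{12}$ is supported at $\mathfrak{P}$ \emph{and} the primes above $p$, and the conclusion that $\chi^{12}$ lands in a finite abelian quotient of order bounded independently of $p$ does not follow as stated. The standard repair is to record the local exponents $s_\mathfrak{q}\in\{0,12\}$ (the ``isogeny signature''), note that one may not swap $\chi$ and $\chi'$ independently at different primes above $p$, and use the evaluation of $\chi^{12}$ on global units of $K=\Q(\sqrt5)$ (via the idelic description) to pin down the signature before passing to the auxiliary-prime congruences. Only after that does the Frobenius computation you describe make sense.

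A second, smaller issue is the target bound $p\ge 11$. The generic version of this argument (as in Freitas--Siksek's irreducibility criteria) yields irreducibility only for $p\ge 17$ or so; the primes $p=11$ and $p=13$ require separate treatment, and for $p=13$ one cannot argue via rational points on $X_0(13)$ since that curve has genus $0$ --- one must exploit the semistability away from $\mathfrak{P}$ and the $2$-torsion structure of the Frey curve (or carry out the signature analysis above, which happens to be cleanest at $p=13$ precisely because $\chi_p^{12}=1$ there). Your closing paragraph correctly anticipates that the potentially multiplicative case at $\mathfrak{P}$ and a finite computation are needed, but without the signature/unit-group step the ``handful of primes $\mathfrak{l}$'' check is not yet well posed.
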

\begin{proof}
  See \cite[Proposition, p.\ 7]{kraussqrt5}.
\end{proof}

\begin{proposition}
\label{level-lower}
  There is a Hilbert newform $f$ of trivial character, parallel weight $2$, and level $\mathfrak{P}^t$ such that $\rhobar_{E_0,p} \simeq \rhobar_{f,\mathfrak{p}}$.
\end{proposition}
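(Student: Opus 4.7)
The strategy is the standard two-step approach: modularity of $E_0$ followed by level lowering. Since $K = \Q(\sqrt{5})$ is a real quadratic field, the modularity theorem of Freitas--Le Hung--Siksek gives that $E_0$ is modular. Consequently, there exists a Hilbert newform $g$ of parallel weight $2$, trivial character, and level $N(E_0)$ such that $\rhobar_{E_0,p} \simeq \rhobar_{g,\mathfrak{p}}$ for some prime $\mathfrak{p} \mid p$ in the coefficient field of $g$.

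To descend to level $\mathfrak{P}^t$, I would apply the Hilbert modular analogue of Ribet's level-lowering theorem, due to Fujiwara, Jarvis, and Rajaei. For each prime $\mathfrak{q} \ne \mathfrak{P}$ with $\mathfrak{q} \mid abc$, the Frey curve $E_0$ has multiplicative reduction at $\mathfrak{q}$, and its minimal discriminant satisfies $v_{\mathfrak{q}}(\Delta_{\min}(E_0)) = 2p \, v_{\mathfrak{q}}(abc)$, which is divisible by $p$. Tate uniformization then implies that $\rhobar_{E_0,p}$ is unramified at $\mathfrak{q}$ (respectively, finite flat when $\mathfrak{q} \mid p$), which is precisely the local hypothesis that permits removal of $\mathfrak{q}$ from the level. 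Iterating over all such $\mathfrak{q}$ and invoking Proposition~\ref{Conductor-even} yields a Hilbert newform $f$ of parallel weight $2$, trivial character, and level exactly $\mathfrak{P}^t$ with $\rhobar_{E_0,p} \simeq \rhobar_{f,\mathfrak{p}}$.

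The main technical obstacle is verifying the hypotheses of the level-lowering theorem in this setting. Irreducibility of $\rhobar_{E_0,p}$ is supplied by the preceding proposition, which requires $p \ge 11$; smaller primes must be handled separately. The local conditions at the bad primes $\mathfrak{q} \ne \mathfrak{P}$ reduce to the divisibility $p \mid v_{\mathfrak{q}}(\Delta_{\min})$, an intrinsic feature of the Frey construction. Note that the level at $\mathfrak{P}$ cannot be reduced further: either $E_0$ has additive reduction there (so that the conductor exponent is $2$ or $3$), or one lacks control over $v_{\mathfrak{P}}(\Delta_{\min}) \bmod p$ in the multiplicative case, which leaves the $\mathfrak{P}$-part of the level intact.
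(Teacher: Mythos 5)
Your proposal is correct and follows essentially the same route as the paper: the paper's proof is simply the two citations you identify, namely modularity of elliptic curves over real quadratic fields (Freitas--Le Hung--Siksek) followed by the packaged level-lowering result \cite[Theorem 7]{freitas-siksek-2}, which encapsulates exactly the Fujiwara--Jarvis--Rajaei argument and the local condition $p \mid v_{\mathfrak{q}}(\Delta_{\min})$ at the odd multiplicative primes that you spell out. Your additional remarks on irreducibility for $p \ge 11$ and on why the $\mathfrak{P}$-part of the level survives are consistent with the surrounding discussion in the paper.
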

\begin{proof}
    The elliptic curve $E_0$ over $K$ is modular by \cite{freitas2014elliptic}. Using \cite[Theorem 7]{freitas-siksek-2}, we obtain the desired statement.
\end{proof}

The space of Hilbert newforms of trivial character, parallel weight $2$ and level $\mathfrak{P}, \mathfrak{P}^2$ is zero, so these cases do not occur. In particular, we may now assume that 
\begin{equation}
\label{odd-soln}
    2 \nmid abc 
\end{equation}
and $t = 3$.

There is a unique Hilbert newform $f$ of trivial character, parallel weight $2$, and level $\mathfrak{P}^3$ which corresponds to an elliptic curve $E$ over $K$. Hence, we have that 
\begin{equation}
\label{congruence}
  \rhobar_{E_0,p} \simeq \rhobar_{E,p}
\end{equation} 
by Proposition~\ref{level-lower}.

\begin{lemma}
The elliptic curve $E$ over $K$ is given by
\begin{equation*}
    E : Y^2 = X (X - (-8 + 4 \sqrt{5})) (X + (9 - 4 \sqrt{5})).
\end{equation*}
\end{lemma}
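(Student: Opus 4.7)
The plan is to identify $E$ in two stages: first pin down the newform $f$ by a Hilbert modular forms computation, then exhibit a specific elliptic curve whose $L$-series matches.

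First I would use Magma's Hilbert modular forms package to compute the space of Hilbert cuspidal newforms over $K = \Q(\sqrt{5})$ of trivial character, parallel weight $2$, and level $\mathfrak{P}^3$. The expected output is that this space is one-dimensional with all Hecke eigenvalues in $\Z$; this confirms the uniqueness asserted before the lemma and predicts an associated elliptic curve over $K$ of conductor $\mathfrak{P}^3$.

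Next I would propose the candidate $E : Y^2 = X(X-A)(X+B)$ with $A = -8 + 4\sqrt{5}$ and $B = 9 - 4\sqrt{5}$, and verify two things: (i) its conductor is exactly $\mathfrak{P}^3$, and (ii) its Frobenius traces agree with the Hecke eigenvalues of $f$ at enough primes. For (i), the discriminant is $16\, A^2 B^2 (A+B)^2$; one checks that $A + B = 1$, that $9 - 4\sqrt{5}$ is a unit of $\calO_K$ (since $N_{K/\Q}(9 - 4\sqrt{5}) = 1$), and that $-8 + 4\sqrt{5} = -4(2 - \sqrt{5})$ with $2 - \sqrt{5}$ also a unit, so $\Delta$ is supported on the inert prime $\mathfrak{P} = (2)$. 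Thus $E$ has good reduction away from $\mathfrak{P}$, and Tate's algorithm at $\mathfrak{P}$ should return conductor exponent $3$, consistent with Proposition~\ref{Conductor-even}. For (ii), I would compute $a_{\mathfrak{q}}(E) = N\mathfrak{q} + 1 - \#E(\F_{\mathfrak{q}})$ at primes $\mathfrak{q}$ of small norm and compare to the Hecke data of $f$ produced by Magma; a Faltings--Serre style argument then guarantees that agreement at finitely many sufficiently well-chosen primes forces the two compatible systems of Galois representations to be isomorphic, identifying $E$ with the elliptic curve attached to $f$.

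The main obstacle is the conductor determination at $\mathfrak{P}$, since the residue characteristic equals $2$ and Tate's algorithm must be executed with care in wild characteristic (or delegated to Magma's \texttt{LocalInformation}); everything else is mechanical and is recorded in the transcripts of \cite{programs}.
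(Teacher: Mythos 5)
Your proposal is correct and follows essentially the same route as the paper: check that the displayed curve has conductor $\mathfrak{P}^3$ (your computations $A+B=1$, $N_{K/\Q}(9-4\sqrt{5})=1$ and $-8+4\sqrt{5}=-4(2-\sqrt{5})$ with $2-\sqrt{5}$ a unit are exactly what is needed to see the discriminant is supported at the inert prime $2$), and then invoke modularity of $E$ together with the uniqueness of the Hilbert newform of level $\mathfrak{P}^3$. The Faltings--Serre trace comparison in your step (ii) is harmless but redundant: once the newform space at that level is known to be one-dimensional, modularity plus the conductor computation already force $E$ to correspond to $f$.
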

\begin{proof}
   The conductor of $E$ over $K$ is $\mathfrak{P}^3$. Since $E$ is modular, it corresponds to the unique Hilbert newform $f$ of trivial character, parallel weight $2$, and level $\mathfrak{P}^3$.
\end{proof}

\begin{remark}
In the last section, we explain how to determine the full list of solutions $(a,b,c)$ to $a + b + c = 0$, up to multiplication by a positive unit of $\calO_K$, such that $$E_{a,b,c} : Y^2 = X(X-a)(x+b)$$ gives rise to an elliptic curve in the same isogeny class of $E$ over $K$. We remark that the $2$-adic conditions on these $(a,b,c)$ vary, in particular, there are some triples where $2 \nmid abc$. This implies that inertia arguments at $2$ will fail as any solution which is $2$-adically close to one of these triples $(a,b,c)$ cannot be ruled out by inertia arguments at $2$.
\end{remark}

Case $r=1$:

Let $L = K(\zeta_r)$ where $\zeta_r$ is a primitive $r$th root of unity. Let $k = \mathcal{O}_K/3 \mathcal{O}_K \cong \mathbb{F}_9$, noting that $\# k^\times = 8$.

Let $\Fq_3 = 3 \mathcal{O}_K$ and $N(\Fq_3) = 9$ be the norm of $\Fq_3$. 

If \(abc = 0\) in \(k\), then we obtain a bound on $p$ by \cite[p.\ 9]{kraussqrt5}.
In particular, if $p \not= 3$, then $p$ divides
\begin{equation}
\label{Weil-bounds}
    a_{\Fq_3}(f) \pm (N(\Fq_3) +1) \not= 0
\end{equation}
which is non-zero by the Hasse bound 
\begin{equation}
  | a_{\Fq_3}(f)| = |a_{\Fq_3}(E)| \le 2 \sqrt{N(\Fq_3)}.
\end{equation}

If $a_{\Fq_3}(E_0) \not= a_{\Fq_3}(E)$, then we also obtain a bound on $p$ by \cite[p.\ 9]{kraussqrt5}.
In particular, if $p \not= 3$, then $p$ divides
\begin{equation}
\label{local-trace}
    a_{\Fq_3}(E_0) - a_{\Fq_3}(E) \not= 0.
\end{equation}
Thus, either we obtain a bound on $p$ or we have that
$a,b,c \in k^\times$ and $a_{\Fq_3}(E_0) = a_{\Fq_3}(E)$. The bound can be computed to be $p \in \left\{ 2,3 \right\}$.

Assume now that $a,b,c \in k^\times$ and $a_{\Fq_3}(E_0) = a_{\Fq_3}(E)$. Now set
\begin{equation}
    \epsilon = a^p b^p c^{-2p} \text{ in } k.
\end{equation}
Since $p^2 \equiv 1 \pmod 8$, we have that $\epsilon^R = ab c^{-2}$ in $k$ where $R = R^* \equiv p \pmod  8$. Hence, the condition \eqref{eq:reciprocity5} becomes 
\begin{equation}
\label{reciprocity-check-2}
    \legendre[K]{\epsilon^R-1}{3} \not= -1,
\end{equation}
for all permutations of $a,b,c$ as \eqref{odd-soln} holds. Using {\tt Magma}, we can check the set of triples $(a,b,c) \in (k^\times)^3$ which satisfy 
\begin{equation*}
  a_{\Fq_3}(E_0) = a_{\Fq_3}(E),
\end{equation*}
and \eqref{reciprocity-check-2} for all permutations of $a,b,c$ is empty if $p \equiv 5, 7 \pmod{8}$.

Case $r = 3$:

Let $L = K(\zeta_r)$ where $\zeta_r$ is a primitive $r$th root of unity. Let $k = \mathcal{O}_K/21 \mathcal{O}_K \cong \mathbb{F}_9 \times \mathbb{F}_{49}$, noting that $\# k^\times = 384$. Let $\mathfrak{q}_3 = 3 \calO_K$ and $\mathfrak{q}_7 = 7 \calO_K$.

If \(abc \notin k^\times \) or one of the following two conditions holds,
\begin{align*}
  a_{\Fq_3}(E_0) & \not= a_{\Fq_3}(E), \\
  a_{\Fq_7}(E_0) & \not= a_{\Fq_7}(E),
\end{align*}
we obtain a bound on $p$ similarly as in \eqref{Weil-bounds} and \eqref{local-trace} above. This bound can be computed to be $p \in \left\{ 2,3,5 \right\}$. The case $p = 5$ is covered by \cite{kraussqrt5}.

Assume from here on that $a,b,c \in k^\times$ and both
\begin{align*}
  a_{\Fq_3}(E_0) & = a_{\Fq_3}(E), \\
  a_{\Fq_7}(E_0) & = a_{\Fq_7}(E),
\end{align*}
hold. Now set
\begin{equation}
    \epsilon = a^p b^p c^{-2p} \text{ in } k.
\end{equation}
Suppose $p \equiv R^* \pmod{384}$ and let $R$ be such that $R R^* \equiv 1 \pmod{384}$. Then we have that $\epsilon^R = ab c^{-2}$ in $k$. Hence, the condition \eqref{eq:reciprocity5}, taking $\zeta_r' = \zeta_r^R$ since $r = 3$ divides $384$, becomes 
\begin{equation}
\label{reciprocity-check-1}
    \legendre[K]{\epsilon^R - \zeta_r'}{1- 4 \zeta_r} \not= -1,
\end{equation}
for all permutations of $a,b,c$ as \eqref{odd-soln} holds. Using {\tt Magma}, we can check the set of triples $(a,b,c) \in (k^\times)^3$ which satisfy 
\begin{align*}
  a_{\Fq_3}(E_0) & = a_{\Fq_3}(E), \\
  a_{\Fq_7}(E_0) & = a_{\Fq_7}(E),
\end{align*}
and \eqref{reciprocity-check-1} for all permutations of $a,b,c$ is empty if
\begin{align*}
  & p \equiv 7, 19, 29, 41, 55, 67, 77, 89, 103, 115, 125, 137, 151, 163, 173, 185, \\
  & 199, 211, 221, 233, 247, 259, 269, 281, 295, 307, 317, 329, 343, 355, 365, 377 \pmod{384}.
\end{align*}
It can be verified that the set of congruence condition above is equivalent to $p \equiv 7, 19, 29, 41 \pmod{48}$, noting that $48$ divides $384$. 

This concludes the proof of Theorem~\ref{main-5}.

\begin{remark}
  In the proof of Theorem~\ref{main-5}, we argued on a hypothetical solution $(a,b,c)$ to $a^p + b^p + c^p = 0$ and applied constraints over the residue class ring $k$. However, since $p$ is coprime to $\# k^\times$, we can check the modular and reciprocity constraints in terms of new variables $(a',b',c') = (a^p,b^p,c^p)$ in $k^3$. This remark also applies to Theorem~\ref{main-17}.
\end{remark}

\section{Reciprocity constraints}

In this section, we will prove the reciprocity constraint which is used in the proof of Theorem~\ref{main-5}. For this, we will use the following form of quadratic reciprocity over number fields and a corollary, both of which are taken from \cite{chen-siksek}. 

Let $L$ be a number field with ring of integers $\calO_L$. For an element or ideal of $\calO_L$, we say that it is odd if it is coprime to $2 \calO_L$.
\begin{theorem}\label{thm:genrec}
	Suppose \( L \) is a number field with \(r\) real embeddings. We write
	\(\sgn_{i}(\alpha)\) for the sign of the image of \(\alpha\) under the
		\(i\)th real embedding. Let \(\alpha, \lambda \in \calO_L\) be
		integers with \(\alpha\) odd. Decompose \(\lambda \calO_L =
		\mathfrak{L}\mathfrak{R}\) where \(\mathfrak{R}\) is an odd
		ideal in $\calO_L$. Suppose that \(\alpha\) is a quadratic residue modulo
		\(4\mathfrak{L}\). Then 
		\[
			\legendre[L]{\lambda}{\alpha} \legendre[L]{\alpha}{\mathfrak{R}}
			= {(-1)}^{\sigma}
		\]
		where $\legendre[L]{a}{\mathfrak{R}}$ is the Jacobi symbol in $L$ and 
		\[
			\sigma = \sum_{i=1}^{r} \frac{\sgn_{i}(\alpha) -
			1}{2}\frac{\sgn_{i}(\lambda) - 1}{2}.
		\]
\end{theorem}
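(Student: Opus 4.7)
The plan is to derive the identity from the Hilbert-symbol product formula $\prod_v (\lambda, \alpha)_v = 1$, where the product runs over all places $v$ of $L$, by computing each local Hilbert symbol explicitly and matching the collected factors against the three expressions in the statement. I would partition the places into complex archimedean, real archimedean, odd finite, and dyadic, and dispose of each class in turn.

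Complex places contribute $1$, and at the $i$-th real place $(\lambda,\alpha)_{v_i} = -1$ precisely when $\sgn_i(\lambda)$ and $\sgn_i(\alpha)$ are both negative, so collecting over the real places produces $(-1)^\sigma$. At an odd finite prime $\mathfrak{p}$ with uniformizer $\pi$, writing $\lambda = \pi^a u$ and $\alpha = \pi^b v$ with $u,v$ units, the tame formula evaluates $(\lambda,\alpha)_\mathfrak{p}$ as $(-1)^{ab(N\mathfrak{p}-1)/2}\,\legendre[L]{u}{\mathfrak{p}}^{b}\,\legendre[L]{v}{\mathfrak{p}}^{-a}$, which is trivial when $\mathfrak{p}\nmid\lambda\alpha$. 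Since the Jacobi symbols on the right-hand side are nonzero, $\alpha$ and the odd ideal $\mathfrak{R}$ must be coprime, so the $(-1)^{ab(N\mathfrak{p}-1)/2}$ correction never activates on odd primes. The remaining odd-place factors with $b>0$ (and $a=0$) multiply to $\legendre[L]{\lambda}{\alpha}$, while those with $a>0$ (and $b=0$, i.e.\ $\mathfrak{p}\mid\mathfrak{R}$) multiply to $\legendre[L]{\alpha}{\mathfrak{R}}$.

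The step I expect to be the main obstacle is the dyadic one: showing $(\lambda,\alpha)_\mathfrak{p} = 1$ at every prime $\mathfrak{p}$ above $2$, since the Hilbert symbol at dyadic primes in a general number field has notoriously delicate behaviour. The standard local fact I would invoke is that $(\lambda,\alpha)_\mathfrak{p} = 1$ whenever $\alpha$ is a square modulo $4\lambda\calO_{L,\mathfrak{p}}$: from a congruence $\alpha \equiv y^2 \pmod{4\lambda\calO_{L,\mathfrak{p}}}$ one produces, via Hensel's lemma applied to the indicated approximate square root, a nontrivial local solution of $\alpha X^2 + \lambda Y^2 = Z^2$. The assumption that $\alpha$ is a quadratic residue modulo $4\mathfrak{L}$ is arranged precisely to supply this congruence at every dyadic $\mathfrak{p}$, because $\mathfrak{L}$ contains the entire dyadic part of $(\lambda)$. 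With all dyadic Hilbert symbols equal to $1$, substituting the four contributions into the product formula collapses it to the asserted identity.
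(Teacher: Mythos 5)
The paper offers no proof of Theorem~\ref{thm:genrec} at all --- it is imported verbatim from \cite{chen-siksek} --- so there is nothing internal to compare against; but your strategy of expanding the product formula $\prod_v(\lambda,\alpha)_v=1$ place by place, reading off $(-1)^\sigma$ at the real places and the two Jacobi symbols at the odd places via the tame formula, and killing the dyadic places with the congruence hypothesis, is the standard route and is exactly the mechanism this paper itself uses later for Theorem~\ref{thm:hilbert-constraint}. One bookkeeping slip: you identify the odd primes with $v_\mathfrak{p}(\lambda)>0$ and $v_\mathfrak{p}(\alpha)=0$ with the primes dividing $\mathfrak{R}$, but the decomposition $\lambda\calO_L=\mathfrak{L}\mathfrak{R}$ only forces $\mathfrak{R}$ to be odd, so $\mathfrak{L}$ may carry odd primes (indeed in Corollary~\ref{cor:genrec-cor} one takes $\mathfrak{L}=\lambda\calO_L$). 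At such a prime the tame formula contributes $\legendre[L]{\alpha}{\mathfrak{p}}^{v_\mathfrak{p}(\mathfrak{L})}$, which is not absorbed into $\legendre[L]{\alpha}{\mathfrak{R}}$; it equals $1$ because $\alpha$ is a square modulo $4\mathfrak{L}$, but that factor must be accounted for explicitly.

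The genuine gap is in the dyadic step, precisely where you anticipated trouble. When $v(\lambda)>0$ at a place $v\mid 2$, the congruence $\alpha\equiv y^2\pmod{4\lambda}$ is strictly stronger than $\alpha\equiv y^2\pmod{4\pi_v}$, and Hensel applied to $x^2-\alpha$ does show $\alpha$ is a square in $L_v$, whence $(\lambda,\alpha)_v=1$. But when $v(\lambda)=0$ the hypothesis only gives $\alpha\equiv y^2\pmod 4$, which is exactly one power of $\pi_v$ short of what Hensel on $x^2-\alpha$ needs: elements of $1+4\calO_v$ need not be local squares (in $\Q_2$, $5=1+4$ is not a square), so ``Hensel applied to the indicated approximate square root'' does not produce a solution in the one case that cannot be avoided. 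The local fact you invoke is nevertheless true, and can be proved uniformly as follows: write $\alpha=y^2+4\lambda t$ and seek $z=y+2\lambda s$, $x=2u$ with $z^2=\lambda x^2+\alpha\cdot 1^2$; this reduces to solving $\lambda s^2+ys-u^2=t$ in $\calO_v$. Modulo $\pi_v$ the left side is additive in $(s,u)$ over a residue field of characteristic $2$, and squaring is surjective there, so a solution mod $\pi_v$ exists (take $s\equiv 0$, $u^2\equiv t$); since the $s$-derivative is congruent to the unit $y$, Hensel lifts it. (Alternatively, one shows $1+4\calO_v$ lies in the norm group of every quadratic extension generated by a unit, being generated by squares together with the unramified class.) With this step written out, and the $\mathfrak{L}$-bookkeeping above, your argument is complete.
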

For the definition of Jacobi symbol $\legendre[L]{\lambda}{\mathfrak{R}}$ over a number field $L$, see \cite{chen-siksek} or \cite[Definition 8.2]{neukirch}.
Also, for $\alpha \in \calO_L$, $\legendre[L]{\lambda}{\alpha} := \legendre[L]{\lambda}{\alpha \calO_L}$.

\begin{corollary}\label{cor:genrec-cor}
	Let \(\alpha, \lambda\) be algebraic integers in the number field \( L \) with
	\(\alpha\) odd. Suppose that \(\alpha \equiv \epsilon^{2}
	\pmod{4\lambda}\) for some algebraic integer \(\epsilon\) in $L$. In addition, suppose
	that \(\alpha\) is positive in every real embedding of \( L \). Then 
	\[
		\legendre[L]{\lambda}{\alpha} \neq -1.
	\]
\end{corollary}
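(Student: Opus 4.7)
The plan is to apply Theorem~\ref{thm:genrec} directly to the pair $(\alpha,\lambda)$ and then extract the desired inequality from the three ingredients: the sign term $(-1)^\sigma$, the Jacobi symbol on the odd part $\mathfrak{R}$ of $\lambda\calO_L$, and the assumed square congruence.

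First I would decompose $\lambda\calO_L = \mathfrak{L}\mathfrak{R}$ as in the theorem, and verify its hypotheses. The hypothesis that $\alpha$ is odd is given, and the hypothesis that $\alpha$ is a quadratic residue modulo $4\mathfrak{L}$ is immediate: by assumption $\alpha \equiv \epsilon^2 \pmod{4\lambda\calO_L}$, and $4\mathfrak{L}$ divides $4\lambda\calO_L$, so the same congruence holds modulo $4\mathfrak{L}$. Hence Theorem~\ref{thm:genrec} applies and gives
\[
    \legendre[L]{\lambda}{\alpha}\legendre[L]{\alpha}{\mathfrak{R}} = (-1)^{\sigma}.
\]

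Next I would compute the sign term $\sigma$. Since $\alpha$ is positive under every real embedding of $L$, each factor $(\sgn_i(\alpha)-1)/2$ in the defining sum is zero, so $\sigma = 0$ and the right-hand side equals $1$. I would then handle the Jacobi symbol $\legendre[L]{\alpha}{\mathfrak{R}}$ by restricting the congruence $\alpha \equiv \epsilon^2 \pmod{4\lambda}$ down to $\mathfrak{R}$ (which divides $\lambda\calO_L$): we get $\alpha \equiv \epsilon^2 \pmod{\mathfrak{R}}$, and by the multiplicativity of the Jacobi symbol over prime factors of $\mathfrak{R}$, this gives $\legendre[L]{\alpha}{\mathfrak{R}} = \legendre[L]{\epsilon^2}{\mathfrak{R}} \in \{0,1\}$, since a square reduces to a square (or to $0$) at every residue field. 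Combining with the previous identity yields $\legendre[L]{\lambda}{\alpha} \in \{0,1\}$, in particular $\neq -1$.

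There is no real obstacle here; the only mild bookkeeping concern is the edge case where $\alpha$ and $\mathfrak{R}$ share a prime. In that case the Jacobi symbol on the right is $0$, but the same prime then lies in the support of $\lambda$, so $\legendre[L]{\lambda}{\alpha}$ is also $0$, and the conclusion still holds trivially. Thus Theorem~\ref{thm:genrec} gives the corollary essentially in one line once the total positivity of $\alpha$ is used to kill the $(-1)^\sigma$ factor.
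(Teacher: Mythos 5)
Your proof is correct and is exactly the intended derivation: the paper itself states this corollary without proof (citing \cite{chen-siksek}), and the standard argument is precisely yours --- apply Theorem~\ref{thm:genrec} after noting that $\alpha \equiv \epsilon^2 \pmod{4\lambda}$ gives the quadratic-residue hypothesis modulo $4\mathfrak{L}$, observe that total positivity of $\alpha$ forces $\sigma = 0$, and evaluate $\legendre[L]{\alpha}{\mathfrak{R}} = \legendre[L]{\epsilon^2}{\mathfrak{R}} \in \{0,1\}$ to conclude $\legendre[L]{\lambda}{\alpha} \neq -1$. Your handling of the degenerate case where $\alpha$ and $\mathfrak{R}$ share a prime is also fine, since the symbol then vanishes and the claimed inequality holds trivially.
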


We also require the following lemmas.

\begin{lemma}
\label{fermat-real-r1}
\label{fermat-real}
If $x, y \in \mathbb{R}$ satisfy $x^n + y^n = 1$ for $n \in \mathbb{N}$, then $xy < 1$.
\end{lemma}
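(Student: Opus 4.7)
The plan is to argue by contradiction: assume $xy \ge 1$ and derive a contradiction from $x^n + y^n = 1$ via the AM--GM inequality, after a short case analysis on signs. The first thing I would note is that $xy \ge 1$ forces both $x$ and $y$ to be nonzero and to share a common sign, so the argument naturally splits according to the common sign of $x,y$ and the parity of $n$.

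The easy subcase is when $n$ is odd and both $x, y$ are negative: then $x^n + y^n < 0$, contradicting $x^n + y^n = 1$ directly. In every remaining subcase, namely both $x, y$ positive, or both negative with $n$ even, we have $x^n = |x|^n$ and $y^n = |y|^n$, so the equation becomes $|x|^n + |y|^n = 1$. Applying AM--GM to the nonnegative reals $|x|^n$ and $|y|^n$ yields
$$ 1 \;=\; |x|^n + |y|^n \;\ge\; 2\sqrt{|x|^n |y|^n} \;=\; 2|xy|^{n/2} \;\ge\; 2, $$
which is absurd. This handles all cases at once.

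I do not anticipate any real obstacle here; the only care needed is in the sign/parity bookkeeping in the opening reduction, after which a single AM--GM inequality finishes the argument uniformly in $n$.
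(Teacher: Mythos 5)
Your proof is correct, but it takes a genuinely different route from the paper's. The paper avoids all sign and parity bookkeeping by observing that the quadratic inequality $t^{2} - t + 1 > 0$ holds for every real $t$; substituting $t = x^{n}$ gives $x^{n}(1 - x^{n}) < 1$ for \emph{all} real $x$, and since $y^{n} = 1 - x^{n}$ this reads $x^{n}y^{n} < 1$, which contradicts $(xy)^{n} \ge 1$ when $xy \ge 1$. Your argument instead disposes of the odd-$n$, both-negative case directly, reduces the remaining cases to $|x|^{n} + |y|^{n} = 1$, and applies AM--GM to get $1 \ge 2|xy|^{n/2} \ge 2$. Both are complete proofs. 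The paper's version buys uniformity: the inequality $t^{2} - t + 1 > 0$ is sign-blind, so no case split is needed. Yours substitutes a standard named inequality (AM--GM) for the ad hoc quadratic estimate, at the cost of the opening case analysis; if you write it up, it is worth stating explicitly that the final step $2|xy|^{n/2} \ge 2$ uses $|xy| = xy \ge 1$, which is exactly the contradiction hypothesis.
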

\begin{proof}
For all real $x$, we have that \(x^{2n} - x^{n} + 1 > 0\). This implies that \(x^{n}(1 - x^{n}) < 1\). If $xy \ge 1$, then we would have that
$x^n y^n = x^n (1-x^n) \ge 1$, a contradiction. Hence we conclude $xy < 1$.
\end{proof}

\begin{lemma}
\label{totally-neg}
		If \((a, b, c)\) is a primitive non-trivial solution to
		Fermat's equation over K for some odd prime \(p >
		2\), then \(ab - c^{2}\) is negative in every real embedding of
		\(K\).
	\end{lemma}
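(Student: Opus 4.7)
The plan is to reduce the claim directly to the real-variable inequality already established in Lemma~\ref{fermat-real}. I would fix an arbitrary real embedding $\sigma : K \hookrightarrow \R$ and, by a slight abuse of notation, write $a,b,c$ for the real numbers $\sigma(a),\sigma(b),\sigma(c)$. These images are nonzero: non-triviality forces $abc \ne 0$ in $K$, and a field embedding cannot send a nonzero element to zero. It then suffices to prove $ab - c^2 < 0$ as real numbers under $\sigma$.

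Since $p$ is an odd prime, $(-c)^p = -c^p$, so the Fermat relation $a^p + b^p + c^p = 0$ rearranges to
\[
    \left(\frac{a}{-c}\right)^p + \left(\frac{b}{-c}\right)^p = 1.
\]
Setting $x = a/(-c)$ and $y = b/(-c)$, Lemma~\ref{fermat-real} applied with $n = p$ yields $xy < 1$, i.e.\ $ab/c^2 < 1$. Because $c^2 > 0$, this rearranges to $ab - c^2 < 0$ under $\sigma$. Since $\sigma$ was an arbitrary real embedding, the desired conclusion holds for every real embedding of $K$.

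There is essentially no substantive obstacle. Lemma~\ref{fermat-real} does all the work; one only needs the two bookkeeping observations that $c$ has nonzero image under every real embedding (from primitivity together with non-triviality) and that $p$ being odd allows the sign on $c^p$ to be absorbed into the base of the $p$th power. With these, the inequality is immediate, and the same argument will apply verbatim in the $\Q(\sqrt{17})$ setting later in the paper.
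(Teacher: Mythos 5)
Your proposal is correct and follows essentially the same route as the paper: both divide the Fermat relation by $(-c)^p$ (using that $p$ is odd), apply Lemma~\ref{fermat-real} with $n=p$ to get $ab/c^2<1$, and then observe that the argument applies to each real embedding separately. Your explicit remark that $c$ has nonzero image under every embedding is a small bookkeeping point the paper leaves implicit, but the substance is identical.
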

	\begin{proof}
		If \((a, b, c)\) is such a triple then
		\[
			{\bigg(\frac{a}{-c}\bigg)}^{p} +
			{\bigg(\frac{b}{-c}\bigg)}^{p} = 1.
		\]
		Now we employ Lemma~\ref{fermat-real} to deduce
		\[
			\frac{ab}{c^{2}} < 1.
		\]
		which gives \(ab - c^2 < 0\).
		Now let \(\rho\) be the other embedding of \(K\) into \(\R\).
		Then \((\rho a, \rho b, \rho c)\) is also a primitive non-trivial solution
		and so using the same argument gives that
		\[
			\rho(ab - c^{2}) = (\rho{a})(\rho{b}) - {(\rho{c})}^2 <
			0.
		\]
	\end{proof}

We now state and prove the reciprocity constraint.
\begin{theorem}\label{thm:sqrt5recip}\label{first-claim}
	Let \(K\) be a number field and let \(\zeta_{r}\) be a primitive $r$th root of unity where \( (r,p) = 1 \). If $r = 1$, assume further that $K$ has an even number of real embeddings. Let \(L = K(\zeta_{r})\) and write \(\zeta_r = {\zeta'_{r}}^p \) where $\zeta'_r$ is a primitive $r$th root of unity. 
	
	If $(a,b,c)$ is a primitive solution over $\calO_K$ to 
	\[
		a^{p} + b^{p} + c^{p} = 0
	\]
	and \(c\) is coprime to \(2\calO_{K}\), then
	\begin{equation}\label{eq:reciprocity5}
				\legendreL{ab - c^{2}\zeta'_{r}}{1 - 4\zeta_{r}}
				\neq -1.
	\end{equation}
	In particular, if $abc$ coprime to $2 \calO_K$, then \eqref{eq:reciprocity5} holds for all permutations of $a,b,c$.
\end{theorem}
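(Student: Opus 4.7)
The strategy is to apply Corollary~\ref{cor:genrec-cor} in most cases, and Theorem~\ref{thm:genrec} directly when $r=1$, taking $\alpha = 1 - 4\zeta_r$ and $\lambda = ab - c^2\zeta'_r$ as elements of $\calO_L$. Oddness of $\alpha$ in $\calO_L$ is immediate since $\alpha \equiv 1 \pmod{2\calO_L}$.

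The central computation is to exhibit $\epsilon \in \calO_L$ satisfying $\alpha \equiv \epsilon^2 \pmod{4\lambda}$. Using $\zeta_r = (\zeta'_r)^p$ together with the Fermat relation $c^{2p} = (a^p+b^p)^2$, I would compute
\begin{equation*}
c^{2p}(1 - 4\zeta_r) - (a^p - b^p)^2 = 4\bigl((ab)^p - (c^2 \zeta'_r)^p\bigr),
\end{equation*}
and observe that the right-hand side is divisible in $\calO_L$ by $4(ab - c^2\zeta'_r) = -4\lambda$, via the standard factorization $x^p - y^p = (x-y)(x^{p-1} + \cdots + y^{p-1})$. This yields $c^{2p}(1 - 4\zeta_r) \equiv (a^p - b^p)^2 \pmod{4\lambda}$. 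To take the square root, I would verify that $c$ is coprime to $4\lambda$ in $\calO_L$: it is coprime to $2\calO_L$ by hypothesis, and primitivity of $(a,b,c)$ together with the Fermat equation shows that any prime of $\calO_L$ dividing $c$ would also divide $a^p+b^p$, hence both $ab$ and $\lambda$, forcing it to divide all of $a$, $b$, $c$, a contradiction. Taking $\epsilon \in \calO_L$ to be any lift of $(a^p - b^p)c^{-p}$ modulo $4\lambda$ then furnishes the required square root.

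With $\alpha$ odd and a square modulo $4\lambda$, what remains is the sign condition at the real embeddings of $L$. For $r \ge 3$, the field $L = K(\zeta_r)$ has no real embedding since $\zeta_r$ is not real, so Corollary~\ref{cor:genrec-cor} applies unconditionally. For $r=2$ we have $L = K$ and $\alpha = 5$ is totally positive, so the corollary applies again. For $r=1$, however, $\alpha = -3$ is totally negative, and I would invoke Theorem~\ref{thm:genrec} directly: Lemma~\ref{totally-neg} guarantees that $\lambda = ab - c^2$ is negative in every real embedding of $K$, so the exponent $\sigma$ appearing in the theorem equals the number of real embeddings of $K$, which is even by assumption. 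The factor $\legendreL{\alpha}{\mathfrak{R}}$ equals $+1$ by the same square congruence whenever it is defined, and any degenerate case in which $\alpha$ and $\mathfrak{R}$ share a common prime forces $\legendreL{\lambda}{\alpha}$ to be $0$, still distinct from $-1$.

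The main subtlety I anticipate is producing the algebraic identity that witnesses $\alpha$ as a square modulo $4\lambda$; once the correct choices of $\alpha$ and $\lambda$ are guessed, however, the identity is essentially forced by the Fermat relation and the factorization of $x^p - y^p$. The secondary obstacle is bookkeeping around coprimality: the primitivity hypothesis lives in $\calO_K$ but the Jacobi symbol computation takes place in $\calO_L$, and one must check that the conclusions of Theorem~\ref{thm:genrec} remain valid in the various degenerate situations. The final clause of the theorem, concerning permutations of $a,b,c$ when $abc$ is coprime to $2\calO_K$, is then immediate because the only asymmetric hypothesis, namely coprimality of the third coordinate with $2\calO_K$, is satisfied by each of $a$, $b$, and $c$ individually under that assumption.
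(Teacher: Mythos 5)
Your proposal follows essentially the same route as the paper: the same identity $(a^{p}-b^{p})^{2} = c^{2p} - 4(ab)^{p}$, the same choice of $\alpha = 1-4\zeta_{r}$ and $\lambda = ab - c^{2}\zeta_{r}'$ with $\epsilon$ a lift of $c^{-p}(a^{p}-b^{p})$, the same verification that $c$ is invertible modulo $4\lambda$ via primitivity, and the same three-way case split on $r$ using Corollary~\ref{cor:genrec-cor} for $r \ge 2$ and Theorem~\ref{thm:genrec} together with Lemma~\ref{totally-neg} and the even number of real embeddings for $r=1$. The only blemish is the phrasing of the coprimality step---a prime dividing $c$ need not divide $ab$; rather, a prime dividing both $c$ and $\lambda$ divides $ab = \lambda + c^{2}\zeta_{r}'$ and hence one of $a,b$, then all three---but the intended argument is clearly the paper's and the conclusion is correct.
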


\begin{proof}
    By assumption, \(c\) is coprime to \(2\calO_{K}\). We employ the identity
    \[
        (a^{p} - b^{p})^{2} = -4{(ab)}^{p} + c^{2p}.
    \]
    Subtracting both sides of this identity by \(c^{2p}(1 - 4\zeta_{r})\) we get
    \begin{align}\label{eq:claim-fac}
		{(a^{p} - b^{p})}^{2} - c^{2p}(1 - 4\zeta_{r})
		& = -4 \left( (ab)^p - \zeta_r c^{2p} \right) \\
		\notag & = -4(ab -c^{2}\zeta'_{r})h 
	\end{align}
	where
	\begin{equation*}
		h = {(ab)}^{p-1} + \zeta'_{r}c^{2}{(ab)}^{p-2} + \ldots +
		\zeta_{r}^{\prime(p-1)}c^{2(p - 1)}.
	\end{equation*}
	Let \(\alpha = 1 - 4\zeta_{r}\), \(\lambda = ab - c^{2} \zeta'_{r}\).
	We first show that \(c\) is invertible modulo \(4\lambda = 4(ab -
	c^{2}\zeta'_{r})\). To this end, let \(\mathfrak{P}\) be a prime in
	\(\calO_{L}\) dividing \(c\). 
	We show that \(\mathfrak{P}\) does not divide \(4\lambda\).
	Indeed, suppose this were the case. Then \(\mathfrak{P}\) divides
	\(4\lambda\) and so either divides \(4 \calO_L\) or divides \(\lambda\). Note
	that \(\mathfrak{P}\) cannot divide \(4 \calO_L\), for then 
	\(\mathfrak{P}\) divides \(2 \calO_L\) and thus 
	\(c \in \mathfrak{P} \cap \calO_{K}\), contradicting that $c$ is coprime to $2 \calO_K$. Hence
	\(\mathfrak{P} \mid \lambda\). Then we have that
	\[
		ab = \lambda + c^{2}\zeta'_{r} \in \mathfrak{P}
	\]
	and so either \(a\) or \(b\) is in \(\mathfrak{P}\). But
	\[
		a^{p} + b^{p} + c^{p} = 0
	\]
	so \(\mathfrak{P}\) divides \(a, b\) and \(c\). We have assumed that
	\(a, b, c\) are coprime in \(\calO_{K}\). Let \(r_{1}, r_{2}, r_{3}\) be
	such that 
	\[
		r_{1}a + r_{2}b + r_{3}c = 1
	\]
	Then we get the contradiction that \(1 \in \mathfrak{P}\).

	In light of this, let \(\epsilon \in \calO_{L}\) be such that the image of \(\epsilon\)
	in \(\calO_{L}/(4(ab - c^{2}\zeta'_{r}))\) is 
	\[
		c^{-p}{(a^{p} - b^{p})} 
		\pmod{4(ab - c^{2}\zeta'_{r})}.
	\]
	We see that \(\alpha\) is odd, for if $\mathfrak{P}$ divides both $\alpha$ and $2 \calO_L$, then $\mathfrak{P}$ divides $1$, a contradiction. Then from~\eqref{eq:claim-fac}
	\[
		\alpha \equiv \epsilon^{2} \pmod{4\lambda}.
	\]
	Since \(L\) has no real embeddings if \(r > 2\),
	Corollary~\ref{cor:genrec-cor} gives the result in this case. 
	
	If \(r = 2\), we have that \(\alpha = 5\) is positive in every real embedding of
	\(L = K\) because \(\Q\) is fixed. Corollary~\ref{cor:genrec-cor} then
	gives the result in this case.
	
	From Theorem~\ref{thm:genrec} we have that
	\[
		\legendre[K]{ab - c^{2}}{-3} = {(-1)}^{\sigma}.
	\]
	We are under the assumption that $K$ has an even number of real embeddings $2t$ so that 
	\[
		\sigma = \sum_{i=1}^{2t} \frac{\sgn_{i}(-3) -
		1}{2}\frac{\sgn_{i}(ab - c^{2}) - 1}{2}.
	\]
	We see that \(\sgn_{i}(-3) = -1\) for all \(i\). Lemma~\ref{totally-neg} gives that
	\( ab - c^{2} \) is totally negative and so \(\sgn_{i}(ab - c^{2}) = -1\)
	for all \(i\) and hence we conclude \(\sigma = 2t \) is even. This gives the result for $r = 1$.
\end{proof}

\section{Proof of Theorem~\ref{main-17}}

Let $K = \Q(\sqrt{17})$ and $\mathcal{O}_K$ its ring of integer which has unique factorization. The prime $2$ splits in $K$ as $2 \calO_K = \mathfrak{P}_1 \mathfrak{P}_2$ for prime ideals $\mathfrak{P}_1$ and $\mathfrak{P}_2$ of $K$. Suppose $(a,b,c)$ is a non-trivial primitive solution over $\mathcal{O}_K$ to $\eqref{fermat-equation}$ and let $E_0$ denote the Frey elliptic curve over $K$
\begin{equation}
    E_0: Y^2 = X(X-a^p)(X+b^p).
\end{equation}

\begin{proposition}
\label{Conductor-17}
  Assume $p \ge 5$. Up to scaling $(a,b,c)$ by a unit in $\calO_K$, the conductor of $E_0$ over $K$ is given by
  \begin{equation*}
      N(E_0) = \mathfrak{P}_1 \mathfrak{P}_2 \prod_{\mathfrak{q} \mid abc, \mathfrak{q} \nmid \mathfrak{P}_1 \mathfrak{P}_2} \mathfrak{q}.
  \end{equation*}
\end{proposition}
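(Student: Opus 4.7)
The plan is to carry out Tate's algorithm prime-by-prime, in parallel with the proof of Proposition~\ref{Conductor-even}. A direct computation from the given Weierstrass model yields
\begin{equation*}
\Delta(E_0) = 16 (abc)^{2p}, \qquad c_4(E_0) = 16(a^{2p} + a^p b^p + b^{2p}),
\end{equation*}
and primitivity together with $a^p + b^p + c^p = 0$ forces $a, b, c$ to be pairwise coprime in $\calO_K$.

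For a prime $\mathfrak{q}$ of $\calO_K$ not above $2$: if $\mathfrak{q} \nmid abc$ then $\mathfrak{q} \nmid \Delta$, giving good reduction. If $\mathfrak{q} \mid abc$, pairwise coprimality forces $\mathfrak{q}$ to divide exactly one of $a, b, c$, and a quick check shows $v_{\mathfrak{q}}(a^{2p} + a^p b^p + b^{2p}) = 0$, so $v_{\mathfrak{q}}(c_4) = 0$ and $v_{\mathfrak{q}}(\Delta) > 0$; hence $E_0$ has multiplicative reduction at $\mathfrak{q}$ with conductor exponent $1$.

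The heart of the argument is at $\mathfrak{P}_1$ and $\mathfrak{P}_2$. Since $2$ splits in $\calO_K$, each $\mathfrak{P}_i$ has residue field $\F_2$. Reducing the Fermat equation modulo $\mathfrak{P}_i$ shows that exactly one of $a, b, c$ lies in $\mathfrak{P}_i$: if all were units then $a^p + b^p + c^p \equiv 1 + 1 + 1 = 1 \pmod{\mathfrak{P}_i}$, a contradiction, while pairwise coprimality forbids more than one. A three-case analysis on which variable is in $\mathfrak{P}_i$ then gives $v_{\mathfrak{P}_i}(a^{2p} + a^p b^p + b^{2p}) = 0$ (for instance, if $c \in \mathfrak{P}_i$ then $a^p \equiv -b^p \pmod{\mathfrak{P}_i^{p}}$ and so $a^{2p} + a^p b^p + b^{2p} \equiv b^{2p}$, a unit), so
\begin{equation*}
v_{\mathfrak{P}_i}(c_4) = 4, \qquad v_{\mathfrak{P}_i}(\Delta) = 4 + 2p\, v_{\mathfrak{P}_i}(abc) \ge 4 + 2p.
\end{equation*}
Consequently $v_{\mathfrak{P}_i}(j) = 8 - 2p\, v_{\mathfrak{P}_i}(abc) \le -2 < 0$ for $p \ge 5$, and rescaling by a uniformizer of $\mathfrak{P}_i$ produces a minimal model with $v_{\mathfrak{P}_i}(c_{4,\min}) = 0$ and $v_{\mathfrak{P}_i}(\Delta_{\min}) \ge 2$, so the reduction is multiplicative with conductor exponent $1$.

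The clause \emph{up to scaling $(a,b,c)$ by a unit} is there to guarantee that one can simultaneously realize genuine multiplicative reduction (rather than an additive quadratic twist thereof) at both $\mathfrak{P}_1$ and $\mathfrak{P}_2$: replacing $(a, b, c)$ by $(\epsilon a, \epsilon b, \epsilon c)$ twists $E_0$ by $\epsilon$ modulo squares (using that $p$ is odd), and since $\calO_K^\times / (\calO_K^\times)^2 \cong \Z/2 \times \Z/2$ has two independent generators, an appropriate choice of $\epsilon$ corrects the twist at both primes above $2$ at once. Assembling the local contributions then yields the claimed formula. The main obstacle I expect is precisely this $2$-adic bookkeeping at the split primes, mirroring the role of \cite[Lemme~3]{kraussqrt5} in the $\Q(\sqrt{5})$ case; once it is in place, the remaining odd-prime analysis is routine.
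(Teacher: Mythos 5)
The paper itself gives no computation here: it simply cites \cite[Corollary 5.1]{freitas-siksek-1}, remarking only that unique factorization of $\calO_K$ removes the auxiliary prime $\mathfrak{r}$ appearing there. So your attempt at a direct Tate-algorithm proof is a genuinely different route, and your formulas for $c_4$ and $\Delta$, as well as the treatment of the odd primes $\mathfrak{q}\mid abc$ (pairwise coprimality forcing $v_{\mathfrak{q}}(c_4)=0$, hence multiplicative reduction), are correct and routine.

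The gap is exactly where you suspect it, at $\mathfrak{P}_1,\mathfrak{P}_2$, and it is more than bookkeeping. First, from $v_{\mathfrak{P}_i}(c_4)=4$, $v_{\mathfrak{P}_i}(\Delta)\ge 4+2p$ and $v_{\mathfrak{P}_i}(j)<0$ you may only conclude \emph{potentially} multiplicative reduction; ``rescaling by a uniformizer'' is not a legal step by itself. To reach a model with $v_{\mathfrak{P}_i}(c_{4,\min})=0$ one must exhibit an integral change of variables $[u;r,s,t]$ with $v(u)=1$, which over $K_{\mathfrak{P}_i}\cong\Q_2$ means completing to a model of the form $y^2+xy=\dots$; this requires a congruence of the shape $a^p\equiv -1\pmod{\mathfrak{P}_i^2}$ on the odd term playing the role of $A$ (and, when $c$ is the even variable, a prior translation moving the even root to the origin). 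In residue characteristic $2$ the inequalities $v(c_4)\ge 4$, $v(c_6)\ge 6$, $v(\Delta)\ge 12$ do not imply non-minimality, and when the congruence fails the reduction genuinely is additive (conductor exponent $4$ or $6$ here), so the conclusion of your third paragraph is false before the scaling is performed. Second, your fix in the last paragraph is asserted too quickly: knowing $\calO_K^\times/(\calO_K^\times)^2\cong\Z/2\times\Z/2$ abstractly does not let you correct the twist at both primes independently. What is needed is that the global units surject onto $(\calO_K/\mathfrak{P}_1^2)^\times\times(\calO_K/\mathfrak{P}_2^2)^\times\cong\Z/2\times\Z/2$, i.e.\ that some unit is $\equiv 1$ at one prime above $2$ and $\equiv -1$ at the other. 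This does hold for $\Q(\sqrt{17})$ --- the fundamental unit $4+\sqrt{17}$ has images in the two completions congruent to $1$ and $-1$ modulo $4$ --- but it is a field-specific verification (a unit congruent to $\pm 1$ modulo $4\calO_K$ would contribute only the diagonal), and it is precisely the content of the cited \cite[Corollary 5.1]{freitas-siksek-1}. Supplying these two verifications would make your argument a complete self-contained proof, parallel to the role \cite[Lemme 3]{kraussqrt5} plays for $\Q(\sqrt5)$.
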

\begin{proof}
   See \cite[Corollary 5.1]{freitas-siksek-1}. We note the fact that $\calO_K$ has unique factorization means that there is no need to consider the extraneous prime $\mathfrak{r}$ in \cite{freitas-siksek-1}.
\end{proof}

We may assume that
\begin{equation}
\label{even-soln}
    2 \mid abc, 
\end{equation}
since $a^p + b^p + c^p = 0$ implies one of $a^p,b^p,c^p$ is $0$ in the residue fields of $\mathfrak{P}_1$ and $\mathfrak{P}_2$.

By the arguments in \cite[p.\ 2]{freitas-siksek-1}, we need only prove Theorem~\ref{main-17} for $p \ge 17$, which we now assume.
\begin{proposition}
  The Galois representation $\rhobar_{E_0,p} : G_K \rightarrow \GL_2(\F_p)$ is irreducible if $p \ge 17$.
\end{proposition}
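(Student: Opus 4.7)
The plan is to follow the Freitas-Siksek irreducibility argument for Frey elliptic curves over real quadratic fields of narrow class number one. Note that $K = \Q(\sqrt{17})$ satisfies this assumption: its fundamental unit $4 + \sqrt{17}$ has norm $-1$, so the narrow class number of $K$ equals its usual class number, which is one.

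Suppose for contradiction that $\rhobar_{E_0,p}$ is reducible. Then its semisimplification has the form $\theta \oplus \theta'$ with characters $\theta, \theta' : G_K \to \bar{\F}_p^\times$ such that $\theta \theta' = \chi_p$, where $\chi_p$ is the mod $p$ cyclotomic character. First I would control the conductor of $\theta$: by Proposition~\ref{Conductor-17}, $E_0$ has multiplicative reduction at every prime of bad reduction, including $\mathfrak{P}_1, \mathfrak{P}_2$ above $2$. Tate's theory at such a prime $\mathfrak{q} \nmid p$ forces the inertia subgroup $I_\mathfrak{q}$ to act on $E_0[p]$ via an upper-triangular matrix with diagonal $(1, \chi_p|_{I_\mathfrak{q}})$; consequently $\theta$ is either unramified or equal to $\chi_p$ on each such $I_\mathfrak{q}$. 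At primes $\mathfrak{p} \mid p$, the semistability of $E_0$ together with Raynaud/Fontaine theory expresses $\theta|_{I_\mathfrak{p}}$ in terms of fundamental characters of level dividing $2$. Combining these constraints with $\theta \theta' = \chi_p$ shows that $\theta^{12}$ corresponds to a Hecke character of $K$ of conductor dividing $p \calO_K$.

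Next, the narrow class number one assumption implies that the totally positive units of $\calO_K$ are squares, and hence the relevant ray class group modulo $p \calO_K \infty_1 \infty_2$ has order dividing an explicit integer whose prime factors are all strictly less than $17$. This contradicts the existence of $\theta^{12}$ for $p \ge 17$. The precise computation is carried out uniformly for all real quadratic fields of narrow class number one in \cite[Theorem 2]{freitas-siksek-1}, which we may invoke directly here.

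The main obstacle in writing this argument out from scratch is the careful class field theory and Hecke character analysis at the primes above $p$, particularly handling the fundamental characters of level $2$ at inert primes and ensuring the bound $17$ is sharp. Since this is already handled in full generality in Freitas-Siksek for all real quadratic fields of narrow class number one, the cleanest approach is to cite their result. An alternative route, which avoids the ramification analysis entirely, is to invoke a uniform bound on prime degrees of $K$-rational isogenies of non-CM elliptic curves over quadratic fields (due to Mazur, Kenku, Momose and successors) together with the observation that $E_0$ is not CM, as its $j$-invariant has a pole at any $\mathfrak{q} \mid abc$.
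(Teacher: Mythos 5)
Your proposal is correct and takes essentially the same route as the paper, which proves this proposition simply by citing Freitas--Siksek (\cite[Lemma 6.1]{freitas-siksek-1}); your sketch of their ray-class-group argument over real quadratic fields of narrow class number one is an accurate summary of what that citation contains, so invoking their result directly is exactly what the authors do. (Your aside about a uniform bound on prime isogeny degrees over quadratic fields is not unconditionally available in the generality you suggest, but you do not rely on it.)
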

\begin{proof}
 See \cite[Lemma 6.1., p.\ 9]{freitas-siksek-1}.
\end{proof}

\begin{proposition}
\label{level-lower-17}
  There is a Hilbert newform $f$ of trivial character, parallel weight $2$, and level $\mathfrak{P}_1 \mathfrak{P}_2$ such that $\rhobar_{E_0,p} \simeq \rhobar_{f,\mathfrak{p}}$.
\end{proposition}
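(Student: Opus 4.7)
The plan is to mirror the proof of Proposition~\ref{level-lower} from the $\Q(\sqrt{5})$ case, replacing the relevant inputs with the $\Q(\sqrt{17})$ analogues. First, I would appeal to modularity of the Frey curve $E_0$ over the real quadratic field $K = \Q(\sqrt{17})$, which is supplied by \cite{freitas2014elliptic}. This produces a Hilbert newform $g$ of trivial character and parallel weight $2$ whose level is exactly the conductor of $E_0$, namely
\[
N(E_0) = \mathfrak{P}_1 \mathfrak{P}_2 \prod_{\mathfrak{q} \mid abc,\ \mathfrak{q} \nmid \mathfrak{P}_1 \mathfrak{P}_2} \mathfrak{q}
\]
as computed in Proposition~\ref{Conductor-17}, together with a prime $\mathfrak{p}$ above $p$ in the coefficient field of $g$ such that $\rhobar_{E_0,p} \simeq \rhobar_{g,\mathfrak{p}}$.

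Next, I would invoke the level-lowering theorem \cite[Theorem 7]{freitas-siksek-2} to strip away the primes $\mathfrak{q} \mid abc$ with $\mathfrak{q} \nmid 2\calO_K$ from the level. The hypotheses needed are (i) irreducibility of $\rhobar_{E_0,p}$, which holds for $p \ge 17$ by the preceding proposition, and (ii) that at each such $\mathfrak{q}$ the representation $\rhobar_{E_0,p}$ extends to a finite flat group scheme (equivalently, $p \mid v_\mathfrak{q}(\Delta_{E_0})$). The latter is a standard property of the Frey curve: at a prime $\mathfrak{q}$ of multiplicative reduction with $\mathfrak{q} \nmid 2\calO_K$, the minimal discriminant has valuation divisible by $p$ because $abc$ is a $p$th power up to unit and $2$-adic contributions. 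Applying level lowering successively at each such prime drops the level to $\mathfrak{P}_1 \mathfrak{P}_2$, yielding the required Hilbert newform $f$.

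Since the proof is a routine transfer of the argument used in Proposition~\ref{level-lower}, there is no serious obstacle; the only points to check are that the modularity result of \cite{freitas2014elliptic} applies to elliptic curves over $\Q(\sqrt{17})$ (which it does, as it covers all real quadratic fields), and that \cite[Theorem 7]{freitas-siksek-2} remains applicable when $2\calO_K$ is split rather than inert, which is merely a bookkeeping matter since the theorem is stated over general totally real fields and only the primes away from $2$ are being removed.
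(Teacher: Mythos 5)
Your proposal is correct and follows essentially the same route as the paper, which likewise deduces modularity of $E_0$ from \cite{freitas2014elliptic} and then applies \cite[Theorem 7]{freitas-siksek-2} to lower the level to $\mathfrak{P}_1\mathfrak{P}_2$; the paper simply cites these two results without spelling out the verification of the level-lowering hypotheses that you include.
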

\begin{proof}
    The elliptic curve $E_0$ over $K$ is modular by \cite{freitas2014elliptic}. Using \cite[Theorem 7]{freitas-siksek-2}, we obtain the desired statement.
\end{proof}

There is an unique Hilbert newform $f$ of trivial character, parallel weight $2$ and level $\mathfrak{P}_1\mathfrak{P}_2$ and this corresponds to an elliptic curve $E$ over $K$. 
\begin{lemma}
The elliptic curve $E$ over $K$ is given by
\begin{equation*}
    E : Y^2 = X (X - (4 - \sqrt{17})) \left( X + \frac{-13+5 \sqrt{17}}{2} \right).
\end{equation*}
\end{lemma}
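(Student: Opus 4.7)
The plan is to mirror the proof of the corresponding lemma for $\Q(\sqrt{5})$. Uniqueness of the Hilbert newform $f$ of trivial character, parallel weight $2$, and level $\mathfrak{P}_1 \mathfrak{P}_2$, together with its correspondence to some elliptic curve $E/K$, has already been asserted just before the lemma; what remains is to identify $E$ explicitly as the curve displayed.

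First, I would verify directly that the Weierstrass model
\begin{equation*}
E : Y^2 = X(X - (4 - \sqrt{17}))\left(X + \frac{-13 + 5\sqrt{17}}{2}\right)
\end{equation*}
defines an elliptic curve over $K$ whose conductor is exactly $\mathfrak{P}_1 \mathfrak{P}_2$. This is a purely local calculation: compute the discriminant $\Delta(E) \in \calO_K$ of the model, factor it in $\calO_K$, and apply Tate's algorithm at each prime dividing $\Delta$ in order to determine the local conductor exponent. Because the model has the form $Y^2 = X(X-e_1)(X-e_2)$ with $e_1,e_2 \in \calO_K$, its discriminant is $16 e_1^2 e_2^2 (e_1-e_2)^2$, and the resulting factorization in the unique factorization domain $\calO_K$ is straightforward to analyze. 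The expected outcome is multiplicative reduction, with conductor exponent $1$, at each of $\mathfrak{P}_1$ and $\mathfrak{P}_2$, and good reduction at all other primes.

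Once the conductor is confirmed, modularity of elliptic curves over real quadratic fields from \cite{freitas2014elliptic} associates to $E$ a Hilbert newform of trivial character, parallel weight $2$, and level equal to its conductor $\mathfrak{P}_1 \mathfrak{P}_2$. By the uniqueness of such a newform invoked immediately before the lemma statement, this newform must coincide with $f$, so the elliptic curve associated to $f$ is indeed the one displayed.

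The only real obstacle is the bookkeeping involved in running Tate's algorithm at the two primes above $2$; since $2$ splits in $K$, the local rings are isomorphic to $\Z_2$ and the analysis is routine. In practice this is handled by a short {\tt Magma} computation of the sort documented in \cite{programs}.
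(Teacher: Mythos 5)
Your proposal is correct and follows the same route as the paper: the paper's proof likewise verifies that the displayed curve has conductor $2\calO_K = \mathfrak{P}_1\mathfrak{P}_2$ (citing the computation in Freitas--Siksek) and then concludes by modularity together with the uniqueness of the Hilbert newform of that level. Your expansion of the conductor check into an explicit discriminant/Tate's algorithm computation is just a more detailed account of the same step.
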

\begin{proof}
   See \cite[p.\ 13]{freitas-siksek-1}. The conductor of $E$ over $K$ is $2 \calO_K$. Since $E$ is modular, it corresponds to the unique Hilbert newform $f$ of trivial character, parallel weight $2$, and level $2 \calO_K$.
\end{proof}
Hence, we have that 
\begin{equation}
\label{congruence-17}
  \rhobar_{E_0,p} \simeq \rhobar_{E,p}
\end{equation} 
by Proposition~\ref{level-lower-17}.

\begin{proof}[Proof of Theorem \ref{main-17}]

We give two proofs using $r = 1$ and $r = 3$.

Case $r=1$:

Let $L = K(\zeta_r)$ where $\zeta_r$ is a primitive $r$th root of unity. Let $k = \mathcal{O}_K/3 \mathcal{O}_K \cong \mathbb{F}_9$, noting that $\# k^\times = 8$. Let $\mathfrak{q}_3 = 3 \calO_K$. 

If \(abc \notin k^\times \) or $a_{\Fq_3}(E_0)  \not= a_{\Fq_3}(E)$, then we obtain a bound on $p$ similarly as in \eqref{Weil-bounds} and \eqref{local-trace} above. This bound can be computed to be $p \in \left\{ 2,3 \right\}$.

Assume from here on that $a,b,c \in k^\times$ and $a_{\Fq_3}(E_0) = a_{\Fq_3}(E)$ holds. For $a,b,c \in k^\times$, we set
\begin{equation}
    \epsilon = a^p b^p c^{-2p} \text{ in } k.
\end{equation}
Since $p^2 \equiv 1 \pmod 8$, we have that $\epsilon^R = ab c^{-2}$ in $k$ where $R = R^* \equiv p \pmod  8$. Hence, the condition \eqref{jacobi} becomes 
\begin{equation}
\label{reciprocity-check-3}
    \legendre[K]{\epsilon^R-1}{3} \not= -1,
\end{equation}
for all permutations of $a,b,c$ as \eqref{even-soln} holds. We note that \eqref{unity-condition} holds as if not, by the left hand side of \eqref{hilbert-identity}, we would have that $\sqrt{-3} \in K$.

Using {\tt Magma}, we can check the set of triples $(a,b,c) \in (k^\times)^3$ which satisfy 
\begin{equation*}
  a_{\Fq_3}(E_0) = a_{\Fq_3}(E),
\end{equation*}
and \eqref{reciprocity-check-3} for all permutations of $a,b,c$ is empty if $p \equiv 5, 7 \pmod{8}$.

Case $r = 3$:

Let $L = K(\zeta_r)$ where $\zeta_r$ is a primitive $r$th root of unity. Let $k = \mathcal{O}_K/21 \mathcal{O}_K \cong \mathbb{F}_9 \times \mathbb{F}_{49}$, noting that $\# k^\times = 384$. Let $\mathfrak{q}_3 = 3 \calO_K$ and $\mathfrak{q}_7 = 7 \calO_K$.

If \(abc \notin k^\times \) or one of the following two conditions holds,
\begin{align*}
  a_{\Fq_3}(E_0) & \not= a_{\Fq_3}(E), \\
  a_{\Fq_7}(E_0) & \not= a_{\Fq_7}(E),
\end{align*}
we obtain a bound on $p$ similarly as in \eqref{Weil-bounds} and \eqref{local-trace} above. This bound can be computed to be $p \in \left\{ 2,3,5,7 \right\}$.

Assume from here on that $a,b,c \in k^\times$ and both
\begin{align*}
  a_{\Fq_3}(E_0) & = a_{\Fq_3}(E), \\
  a_{\Fq_7}(E_0) & = a_{\Fq_7}(E),
\end{align*}
hold. For $a,b,c \in k^\times$, we set
\begin{equation}
    \epsilon = a^p b^p c^{-2p} \text{ in } k.
\end{equation}
Suppose $p \equiv R^* \pmod{384}$ and let $R$ be such that $R R^* \equiv 1 \pmod{384}$. Then we have that $\epsilon^R = ab c^{-2}$ in $k$. Hence, the condition \eqref{jacobi}, taking $\zeta_r' = \zeta_r^R$ since $r = 3$ divides $384$, becomes 
\begin{equation}
\label{reciprocity-check}
    \legendre[K]{\epsilon^R -  \zeta_r'}{1- 4 \zeta_r} \not= -1,
\end{equation}
for all permutations of $a,b,c$ as \eqref{even-soln} holds. We note that the hypothesis \eqref{unity-condition} holds as $K$ is totally real.

Using {\tt Magma}, we can check the set of triples $(a,b,c) \in (k^\times)^3$ which satisfy 
\begin{align*}
  a_{\Fq_3}(E_0) & = a_{\Fq_3}(E), \\
  a_{\Fq_7}(E_0) & = a_{\Fq_7}(E),
\end{align*}
and \eqref{reciprocity-check} for all permutations of $a,b,c$ is empty if 
\begin{align*}
& p \equiv 5, 7, 13, 23, 29, 31, 37, 47, 53, 55, 61, 71, 77, 79, 85, 95, 101, 103, \\
& 109, 119, 125, 127, 133, 143, 149, 151, 157, 167, 173, 175, 181, 191, 197, 199, 205, \\
& 215, 221, 223, 229, 239, 245, 247, 253, 263, 269, 271, 277, 287, 293, 295, 301, \\
& 311, 317, 319, 325, 335, 341, 343, 349, 359, 365, 367, 373, 383 \pmod{384}.
\end{align*}
It can be verified that the congruence condition above is equivalent to $p \equiv 5, 7, 13, 23 \pmod{24}$, noting that $24$ divides $384$. Finally, the congruence $p \equiv 5, 7, 13, 23 \pmod{24}$ is equivalent to $p \equiv 5, 7 \pmod 8$ for prime $p \ge 5$.

\end{proof}

\section{Reciprocity constraints using the Hilbert symbol}

In this section, we use Hilbert symbols to prove a strengthened reciprocity constraint which does not have a condition on $c$.
\begin{definition}
For a global field $L$ we define the \textit{Hilbert symbol} $( \cdot, \cdot)_L:L^{\times}\times L^{\times} \rightarrow \{-1,1\}$ as
\begin{equation}
\label{hilbertsymbol}
(a,b)_L := \begin{cases}1, &\text{ if $z^2 = ax^2 + by^2$ has a non-trivial solution in $L$,}\\
-1, &\text{ otherwise.}\end{cases}
\end{equation}
Let $S_L$ denote the set of normalized places of $L$ and partition them into
\begin{align*}
    S_L^\infty & = \left\{ v \in S_L : v \mid \infty \right\}, \\
    S_L^{\text{even}} & = \left\{ v \in S_L : v \mid 2 \right\}, \\
    S_L^{\text{odd}} & = \left\{ v \in S_L : v \nmid 2 \right\}.
\end{align*}

For a place $v \in S_L$ of $L$, we denote by $(\alpha,\beta)_v := (\alpha, \beta)_{L_v}$, where $L_v$ is the completion of $L$ at $v$. Let $\pi_v$ be a uniformizer for $L_v$, $\mathcal{O}_v$ the ring of integers of $L_v$, and $\F_v$ the residue field of $L_v$.
\end{definition}

We will state a few useful properties of the Hilbert symbol for later use.
\begin{lemma}
\label{hilbert-pairing}
The Hilbert symbol defines a nondegenerate symmetric bimultiplicative pairing.
\end{lemma}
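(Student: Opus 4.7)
The plan is to establish the three properties — symmetry, bimultiplicativity, and nondegeneracy — in turn.

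Symmetry is immediate: the conic $z^2 - ax^2 - by^2 = 0$ in~\eqref{hilbertsymbol} is invariant under the swap $(a,x) \leftrightarrow (b,y)$, so a nontrivial $L$-solution for $(a,b)$ gives one for $(b,a)$. For bimultiplicativity, I would invoke the norm characterization that $(a,b)_L = 1$ if and only if $a$ lies in the image of the norm map $N_{L(\sqrt{b})/L}\colon L(\sqrt{b})^\times \to L^\times$, under the convention that $L(\sqrt{b}) = L$ when $b \in (L^\times)^2$ (making the condition vacuous). The forward direction comes from rewriting any nontrivial solution with $x \neq 0$ as $a = (z/x)^2 - b(y/x)^2$ and noting that $x=0$ forces $b$ to be a square; the converse produces $(x,y,z) = (1,\beta,\alpha)$ from a norm expression $a = \alpha^2 - b\beta^2$. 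Since norm images are subgroups of $L^\times$, the set $\{a : (a,b)_L = 1\}$ is a subgroup for each fixed $b$, yielding multiplicativity in the first slot; symmetry then transfers this to the second.

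For nondegeneracy, I would argue contrapositively. If $a \notin (L^\times)^2$, then $L(\sqrt{a})/L$ is a proper quadratic extension, so by Chebotarev's density theorem there exist primes $\mathfrak{p}$ of $L$ inert in $L(\sqrt{a})$. At any such $\mathfrak{p}$, a uniformizer has odd valuation and hence lies outside the local norm group from $L_\mathfrak{p}(\sqrt{a})/L_\mathfrak{p}$; weak approximation then delivers a global $b \in L^\times$ with odd $\mathfrak{p}$-adic valuation, giving $(a,b)_\mathfrak{p} = -1$ locally, and the Hasse--Minkowski local-global principle for conics forces $(a,b)_L = -1$.

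The main obstacle is the nondegeneracy step, which genuinely leans on Chebotarev density, weak approximation, and Hasse--Minkowski rather than on formal manipulations with the defining conic. A unified treatment bypassing each of these ingredients via the Brauer group — identifying the symbol with the splitting class of the quaternion algebra $\left(\frac{a,b}{L}\right)$ and reading off bilinearity from the cup product on $H^1(G_L,\mu_2)$ — is available in \cite{Voi20}, and one may also simply quote it there.
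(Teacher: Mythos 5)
Your symmetry argument is fine, and since the paper's own ``proof'' is just the citation to \cite{Voi20}, a self-contained argument would be welcome; but as written yours has a genuine gap, and it is aimed at the wrong object. The bimultiplicativity step is a non sequitur: knowing that $\{a : (a,b)_L = 1\}$ is a subgroup $N_b$ of $L^\times$ (the norm group of $L(\sqrt{b})/L$) only tells you that the fiber of $a \mapsto (a,b)_L$ over $+1$ is a subgroup. For that map to be a homomorphism to $\{\pm 1\}$ you additionally need $[L^\times : N_b] \le 2$; in particular you must rule out the case $(a,b)_L = (a',b)_L = -1$ but $(aa',b)_L = -1$, and the subgroup property says nothing about it. Over a \emph{local} field the index is exactly $2$ for non-square $b$ (local class field theory, or the explicit computations in \cite{Voi20}), and that is the real content of the lemma. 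Over the \emph{global} field $L$, where the definition \eqref{hilbertsymbol} and your nondegeneracy argument (Chebotarev, weak approximation) both live, the index of the global norm group is typically infinite and bimultiplicativity genuinely fails: over $\Q$ one has $(3,-1)_\Q = (7,-1)_\Q = -1$ and yet $(21,-1)_\Q = -1$, since none of $3$, $7$, $21$ is a sum of two rational squares. So the global statement you set out to prove is false as a statement about $(\cdot,\cdot)_L$ itself.

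The way out is to note that the paper only ever applies Lemma~\ref{hilbert-pairing} to the local symbols $(\cdot,\cdot)_v$ (e.g.\ in deriving \eqref{eq:hilbert-factor-rec}), which is the setting of \cite[Lemma 12.4.6]{Voi20}. Your norm characterization still works over $L_v$, but it must be supplemented by the index-two fact, and nondegeneracy becomes a local computation (for odd residue characteristic it can be read off from \eqref{eq:odd-hilbert}; the places above $2$ and the real places need separate treatment) rather than a Chebotarev argument. A minor further point: in your nondegeneracy step the appeal to Hasse--Minkowski is only to the trivial direction, since the absence of an $L_{\mathfrak{p}}$-point already precludes an $L$-point.
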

\begin{proof}
  See \cite[Lemma 12.4.6]{Voi20}.
\end{proof}

\begin{lemma}
\label{hilbert-prop}
Let $a, b \in L^\times$. Then the following hold:
\begin{enumerate}
    \item $(a c^2,b d^2)_L = (a,b)_L$ for all $c, d \in L^\times$.
    \item $(b,a)_L = (a,b)_L$.
    \item $(a,b)_L = (a,-ab)_L = (b,-ab)_L$.
    \item $(1,a)_L = (a,-a)_L = 1$.
    \item If $a \not= 1$, then $(a, 1-a) = 1$.
    \item If $\sigma \in \text{Aut}(L)$, then $(a,b)_L = (\sigma(a),\sigma(b))_L.$
\end{enumerate}
\end{lemma}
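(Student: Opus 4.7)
The plan is to verify each of the six items directly from the definition \eqref{hilbertsymbol}, invoking the bimultiplicativity of Lemma~\ref{hilbert-pairing} only for item (3). Items (1), (2), and (6) reduce to elementary changes of variables in the defining conic $z^2 = ax^2 + by^2$: for (1), the substitution $(x,y) \leftrightarrow (cX, dY)$ is a bijection between nontrivial solutions for the pair $(a,b)$ and nontrivial solutions for $(ac^2, bd^2)$; for (2), interchanging the names of the two variables on the right-hand side swaps the roles of $a$ and $b$; for (6), a field automorphism $\sigma \in \Aut(L)$ acts componentwise on $L^3$ and carries nontrivial solutions of $z^2 = ax^2 + by^2$ bijectively to nontrivial solutions of $z^2 = \sigma(a)x^2 + \sigma(b)y^2$, so the two Hilbert symbols agree.

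Items (4) and (5) are verified by producing explicit points on the relevant conic. The triple $(z,x,y) = (1,1,0)$ satisfies $z^2 = x^2 + ay^2$, giving $(1,a)_L = 1$; the triple $(0,1,1)$ satisfies $z^2 = ax^2 + (-a)y^2$, giving $(a,-a)_L = 1$; and, provided $a \neq 1$, the triple $(1,1,1)$ satisfies $z^2 = ax^2 + (1-a)y^2$, giving $(a, 1-a)_L = 1$.

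Item (3) is the only step that uses bimultiplicativity. Having already established (4), I would expand the second argument of $(a,-ab)_L$ as the product $(-1)\cdot a \cdot b$ and apply Lemma~\ref{hilbert-pairing}:
\[
    (a,-ab)_L = (a,-1)_L(a,a)_L(a,b)_L = (a,-a)_L(a,b)_L = (a,b)_L.
\]
A symmetric expansion of $(b,-ab)_L$, combined with the symmetry $(b,a)_L = (a,b)_L$ from (2), then yields $(b,-ab)_L = (a,b)_L$. The only real obstacle is ordering the dependencies coherently, so that (3) is established after (2) and (4); beyond that, every item is a one-line verification from the definition.
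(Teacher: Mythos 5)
The paper gives no argument of its own for this lemma; it simply cites \cite[Lemma 12.4.3]{Voi20}, so your self-contained verification is an addition rather than a deviation. Items (1), (2), (4), (5) and (6) are correct exactly as you present them: each is either an invertible change of variables on the conic $z^2=ax^2+by^2$ or an explicit nontrivial point, and all of these arguments are valid over an arbitrary field, which matches the generality in which the lemma is stated.

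The one step I would not let stand as written is item (3). You derive it from the bimultiplicativity of Lemma~\ref{hilbert-pairing}, but bimultiplicativity is a property of the Hilbert symbol over \emph{local} fields (this is what \cite[Lemma 12.4.6]{Voi20} actually asserts); for the symbol over a number field $L$ as in the definition \eqref{hilbertsymbol}, defined by global solvability of the conic, it fails. For instance over $\Q$ one has $(-1,-1)_\Q=(-1,3)_\Q=(-1,-3)_\Q=-1$, so $(-1,-3)_\Q\neq(-1,-1)_\Q\,(-1,3)_\Q$. Since Lemma~\ref{hilbert-prop} is stated for the global symbol $(\cdot,\cdot)_L$, your proof of (3) has a gap there (inherited, to be fair, from the unqualified statement of Lemma~\ref{hilbert-pairing} in the paper). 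The fix is elementary and keeps the whole proof field-independent: if $(z,x,y)$ is a nontrivial solution of $z^2=ax^2+by^2$, multiplying the equation by $-a$ gives $(ax)^2=az^2-ab\,y^2$, so $(ax,z,y)$ is a nontrivial solution of $Z^2=aX^2-ab\,Y^2$ (it is nonzero because $z=y=0$ would force $x=0$); hence $(a,b)_L=1$ implies $(a,-ab)_L=1$. The converse follows by applying the same step to the pair $(a,-ab)$ together with item (1), since $(a,-a(-ab))_L=(a,a^2b)_L=(a,b)_L$. The identity $(b,-ab)_L=(a,b)_L$ then follows from this applied with $a$ and $b$ interchanged, combined with item (2). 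With that replacement, every item is proved by a direct manipulation of the conic, in the same generality as Voight's Lemma 12.4.3.
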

\begin{proof}
  See \cite[Lemma 12.4.3]{Voi20}.
\end{proof}

\begin{lemma}[Reciprocity Law]
Let $L$ be a number field and $S_L$ the set of places of $L$. Then for any $\alpha, \beta \in L^*$, we have that
\begin{equation}\label{eq:hilbertrec}
    \prod_{v \in S_L} (\alpha, \beta)_v = 1
\end{equation}
\end{lemma}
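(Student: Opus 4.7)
The plan is to recast the product formula as the vanishing of the sum of local invariants of a single global Brauer class, invoking the Albert--Brauer--Hasse--Noether theorem from global class field theory.

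First I would attach to $\alpha, \beta \in L^\times$ the quaternion algebra $B = \left(\frac{\alpha,\beta}{L}\right)$, that is, the central simple $L$-algebra of dimension $4$ with generators $i, j$ and relations $i^2 = \alpha$, $j^2 = \beta$, $ij = -ji$. This gives a class $[B] \in \Br(L)[2]$. At each place $v$ of $L$, the base change $B \otimes_L L_v$ is either split (isomorphic to $M_2(L_v)$) or the unique quaternion division algebra over $L_v$. A standard local computation (see Voight, Chapter 12) identifies the splitting: $B \otimes_L L_v$ splits if and only if the conic $z^2 = \alpha x^2 + \beta y^2$ has a non-trivial $L_v$-point, i.e.\ if and only if $(\alpha,\beta)_v = 1$. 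Under the local invariant map $\inv_v : \Br(L_v) \hookrightarrow \Q/\Z$, this translates to $\inv_v([B \otimes_L L_v]) = 0$ when $(\alpha,\beta)_v = +1$ and $\inv_v([B \otimes_L L_v]) = 1/2$ when $(\alpha,\beta)_v = -1$.

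With this dictionary in hand, I would invoke the Albert--Brauer--Hasse--Noether exact sequence
\begin{equation*}
0 \To \Br(L) \To \bigoplus_{v \in S_L} \Br(L_v) \xrightarrow{\;\sum_v \inv_v\;} \Q/\Z \To 0.
\end{equation*}
Since $[B]$ comes from a global Brauer class, the sum of its local invariants vanishes in $\Q/\Z$. Almost all terms in this sum are zero (for $v \nmid 2\infty$ at which both $\alpha$ and $\beta$ are units, a Hensel-lifting argument forces $(\alpha,\beta)_v = 1$), and each nonzero term equals $1/2$, so the vanishing says exactly that $\#\{v \in S_L : (\alpha,\beta)_v = -1\}$ is even. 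Multiplicatively, this is $\prod_{v \in S_L}(\alpha,\beta)_v = 1$.

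The main obstacle is that the Albert--Brauer--Hasse--Noether theorem is itself a deep input from global class field theory, so in any honest write-up it must be invoked as a black box; I would cite it rather than reprove it. Granting that, what remains is entirely local: (i) matching $(\alpha,\beta)_v$ with the Brauer invariant of $B$ at $v$, and (ii) the finiteness statement that $(\alpha,\beta)_v = 1$ for all but finitely many $v$. Both follow from the structure theory of quaternion algebras and quadratic forms over local fields, and both are already in the references the paper uses for the Hilbert symbol.
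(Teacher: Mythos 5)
Your proposal is correct and is essentially the argument behind the result the paper cites: the paper's ``proof'' is simply a reference to \cite[Corollary 14.6.2]{Voi20}, where the product formula is deduced in exactly this way --- identifying $(\alpha,\beta)_v$ with the local splitting behaviour of the quaternion algebra $\left(\frac{\alpha,\beta}{L}\right)$ and invoking the fundamental exact sequence of global class field theory, together with the standard observation that the symbol is trivial at all but finitely many places. So there is nothing to fault beyond the (acknowledged) reliance on Albert--Brauer--Hasse--Noether as a black box, which is unavoidable at this level of generality.
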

\begin{proof}
  See \cite[Corollary 14.6.2]{Voi20}.
\end{proof}

\begin{lemma}
\label{hilbert-odd}
With notation as above, let $q = \# \F_v$ be odd. Write $a = a_0 \pi^{v(a)}$ and $b = b_0 \pi^{v(b)}$. Then we have that
\begin{equation}
    \label{eq:odd-hilbert}
    (a,b)_v = (-1)^{v(a) v(b) (q-1)/2} \genfrac(){}{}{a_0}{\pi_v}^{v(b)} \genfrac(){}{}{b_0}{\pi_v}^{v(a)}.
\end{equation}
\end{lemma}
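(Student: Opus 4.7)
The plan is to exploit the bimultiplicativity of the Hilbert symbol (Lemma~\ref{hilbert-pairing}) together with the formal identities in Lemma~\ref{hilbert-prop} to reduce the formula to three atomic computations: pairings of two units, of a unit with the uniformizer, and of the uniformizer with itself. Writing $a = a_0 \pi_v^{v(a)}$ and $b = b_0 \pi_v^{v(b)}$ with $a_0, b_0 \in \calO_v^\times$, bimultiplicativity yields
\begin{equation*}
(a,b)_v = (a_0, b_0)_v \cdot (a_0, \pi_v)_v^{v(b)} \cdot (\pi_v, b_0)_v^{v(a)} \cdot (\pi_v, \pi_v)_v^{v(a) v(b)},
\end{equation*}
so it suffices to compute each of these three types of symbols separately.

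For the unit-unit pairing, I would show $(u_1, u_2)_v = 1$ for all $u_1, u_2 \in \calO_v^\times$. The ternary form $z^2 - u_1 x^2 - u_2 y^2$ is nondegenerate over the finite field $\F_v$ of odd order $q$, and a pigeonhole argument on the image sets of $z^2 - u_1 x^2$ and $u_2 y^2$ (each of size $(q+1)/2$, which together exceed $q$) produces a nontrivial zero in $\F_v$. Since $q$ is odd, the gradient $(-2u_1 x_0, -2u_2 y_0, 2z_0)$ vanishes only at the origin, so any nontrivial zero is smooth, and Hensel's lemma lifts it to an $\calO_v$-solution.

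For $(u, \pi_v)_v$ with $u \in \calO_v^\times$, I would analyze a putative solution $(x, y, z)$ to $z^2 = u x^2 + \pi_v y^2$ by scaling so that $\min(v(x), v(y), v(z)) = 0$ and then reducing modulo $\pi_v$; a short case analysis forces the image of $u$ to be a square in $\F_v^\times$, and conversely Hensel's lemma produces a solution whenever $u$ is a square modulo $\pi_v$. Hence $(u, \pi_v)_v = \genfrac(){}{}{u}{\pi_v}$. For the final symbol, property (4) of Lemma~\ref{hilbert-prop} gives $(\pi_v, -\pi_v)_v = 1$, so bimultiplicativity forces $(\pi_v, \pi_v)_v = (\pi_v, -1)_v = \genfrac(){}{}{-1}{\pi_v} = (-1)^{(q-1)/2}$ by the unit-times-uniformizer case. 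Substituting these three evaluations into the displayed decomposition yields the claimed formula.

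The main technical obstacle is the unit-unit case, which requires both the isotropy of nondegenerate ternary quadratic forms over finite fields of odd cardinality and the application of Hensel's lemma to upgrade an $\F_v$-solution to an $\calO_v$-solution; the rest is formal manipulation using the properties established in Lemma~\ref{hilbert-prop}.
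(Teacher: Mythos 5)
Your proof is correct, but it is genuinely different from what the paper does: the paper offers no argument at all for this lemma, simply citing \cite[(12.4.10)]{Voi20}, whereas you reprove the formula from scratch. Your route is the classical one (essentially the proof of Theorem~1 in Chapter~III of Serre's \emph{Course in Arithmetic}, transported to an arbitrary nondyadic local field): bimultiplicativity reduces everything to $(u_1,u_2)_v$, $(u,\pi_v)_v$, and $(\pi_v,\pi_v)_v$; the unit--unit symbol is trivial by a counting argument over $\F_v$ plus Hensel (valid because $2\in\calO_v^\times$); the unit--uniformizer symbol equals the residue symbol $\genfrac(){}{}{u}{\pi_v}$; and $(\pi_v,\pi_v)_v=(\pi_v,-1)_v=(-1)^{(q-1)/2}$ via $(\pi_v,-\pi_v)_v=1$. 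All three evaluations and the final assembly check out against \eqref{eq:odd-hilbert}. What your approach buys is self-containedness and transparency about where oddness of $q$ enters (smoothness of the reduced conic and Hensel lifting); what the citation buys is brevity and a uniform treatment alongside the dyadic and archimedean cases needed elsewhere in the paper. One small imprecision worth fixing: in the pigeonhole step the set of values of $z^2-u_1x^2$ over \emph{all} pairs $(x,z)\in\F_v^2$ is not of size $(q+1)/2$; the standard argument fixes $z=1$ and intersects $\{1-u_1x^2\}$ with $\{u_2y^2\}$, each of size $(q+1)/2$, which also guarantees the resulting zero is nontrivial.
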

\begin{proof}
  See \cite[(12.4.10)]{Voi20}.
\end{proof}

We now move on to the theorem that enables us to tackle the reciprocity constraint on the primitive solutions over $\mathbb{Q}(\sqrt{17})$. The following is a generalization of \cite[Proposition 17]{BCDY}.
\begin{theorem}
\label{thm:hilbert-constraint}
Let $L$ be a number field containing a primitive $r$th root of unity $\zeta_r$ such that $(r,n) = 1$ where $n \in \mathbb{N}$. Assume $s, t \in \calO_L$, $v(t) = 0$ for all $v \in S_L$ such that $v(n) > 0$, and  $v(s) > 0$ only for places $v \in
S_L^{\text{even}}$. Furthermore, suppose we have the following identity
\begin{equation}
\label{hilbert-identity}
    A^2 - t B^{2n} = s (C^n - \zeta_r B^{2n}) \not= 0
\end{equation}
for coprime $A, B, C \in \calO_L$ and write $\zeta_r = {\zeta_r'}^n$ where $\zeta_r'$ is a primitive $r$th root of unity. 

Then we have that
\begin{equation}
\label{reciprocity-product}
  \prod_{v \in S_L^\infty} (t,s (C - \zeta_r' B^2))_v \prod_{v \in S_L^{\text{even}}} (t, s (C - \zeta_r' B^2))_v \prod_{v \in S_L^{\text{odd}}, v(t) \text{ odd}} (t,s(C - \zeta_r' B^2))_v = 1.
\end{equation}
\end{theorem}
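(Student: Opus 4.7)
The plan is to apply the Hilbert reciprocity law \eqref{eq:hilbertrec} to the pair $(t, \lambda)$ with $\lambda := s(C - \zeta_r' B^2)$, which gives $\prod_{v \in S_L} (t, \lambda)_v = 1$. Partitioning $S_L$ into $S_L^\infty$, $S_L^{\text{even}}$, $\{v \in S_L^{\text{odd}} : v(t)\text{ odd}\}$, and $\{v \in S_L^{\text{odd}} : v(t)\text{ even}\}$, the desired identity \eqref{reciprocity-product} is equivalent to
\[
\prod_{v \in S_L^{\text{odd}},\, v(t)\text{ even}} (t, \lambda)_v = 1.
\]
I would prove this by showing each factor is individually $1$. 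For such a $v$, write $t = t_0 \pi_v^{v(t)}$ with $t_0 \in \calO_v^\times$. Since $v(t)$ is even, Lemma \ref{hilbert-odd} collapses to $(t, \lambda)_v = \legendre{t_0}{\pi_v}^{v(\lambda)}$, which equals $1$ whenever $v(\lambda)$ is even, so the task reduces to showing $t_0$ is a quadratic residue modulo $\pi_v$ when $v(\lambda)$ is odd.

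To set up the local analysis, use $\zeta_r = (\zeta_r')^n$ to factor the hypothesis as
\[
A^2 - t B^{2n} = \lambda\, h, \qquad h := \sum_{i=0}^{n-1} (\zeta_r')^i C^{n-1-i} B^{2i}.
\]
For $v$ odd with $v(\lambda)$ odd, the support condition on $s$ gives $v \nmid s$, so $v \mid (C - \zeta_r' B^2)$; coprimality of $A, B, C$ then rules out $v \mid B$, since otherwise $v \mid C$ and then $v \mid A$ via the identity. In the easy case $v(t) = 0$, reducing the identity modulo $\pi_v$ gives $A^2 \equiv tB^{2n} \pmod{\pi_v}$ with $t$ and $B$ units at $v$, so $t_0 = t \equiv (AB^{-n})^2 \pmod{\pi_v}$ is a QR.

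The main obstacle is the remaining case $v(t) = 2e \geq 2$, where the naive reduction modulo $\pi_v$ is degenerate since $tB^{2n} \equiv 0 \pmod{\pi_v}$. Here the hypothesis that $v(t) = 0$ whenever $v(n) > 0$ forces $v \nmid n$. Using $C \equiv \zeta_r' B^2 \pmod{\pi_v}$ one computes
\[
h \equiv n\, (\zeta_r')^{n-1} B^{2(n-1)} \pmod{\pi_v},
\]
which is a unit at $v$ since $n$, $\zeta_r'$, and $B$ all are. Hence $v(\lambda h) = v(\lambda)$ is odd while $v(tB^{2n}) = 2e$ is even, so $v(A^2)$ equals the smaller of these and must itself be even, forcing $v(\lambda) > 2e$ and $v(A) = e$. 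Writing $A = A_0 \pi_v^e$ with $A_0 \in \calO_v^\times$ and dividing the identity by $\pi_v^{2e}$ yields $A_0^2 \equiv t_0 B^{2n} \pmod{\pi_v}$, so $t_0 \equiv (A_0 B^{-n})^2$ is a QR. Combining the cases establishes that the product over the ``missing'' odd places is $1$, and the claim follows from reciprocity.
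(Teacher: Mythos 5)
Your proof is correct, and its skeleton --- Hilbert reciprocity applied to $(t,\lambda)$, reduction to the odd places with $v(t)$ even, and the factorization $C^n-\zeta_r B^{2n}=(C-\zeta_r'B^2)\,h$ --- is the same as the paper's; the difference lies in how the local analysis at those places is organized. The paper splits on $v(n)>0$ versus $v(n)=0$. In the first case it argues exactly as in your $v(t)=0$ case, reducing the identity modulo $\pi_v$ to exhibit $t$ as a square in the residue field. In the second case it never looks at $t_0$ at all: it first notes that $(t,\,s(C^n-\zeta_rB^{2n}))_v=1$ for every $v$, because $A^2=tB^{2n}+s(C^n-\zeta_rB^{2n})\cdot 1^2$ is a representation by the relevant ternary form, so by bimultiplicativity $(t,\lambda)_v=(t,h)_v$, and then $v(h)=0$ (from $h\equiv nC^{n-1}\pmod{\pi_v}$) together with $v(t)$ even finishes via Lemma~\ref{hilbert-odd}. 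You instead split on $v(t)=0$ versus $v(t)=2e\geq 2$ and, in the latter case, avoid the bimultiplicativity trick by a valuation comparison: $h$ is a unit, $v(\lambda h)=v(\lambda)$ is odd while $v(tB^{2n})=2e$ is even, so $v(A)=e$, and dividing by $\pi_v^{2e}$ exhibits $t_0$ as a square modulo $\pi_v$. Both arguments are valid and exploit the hypothesis linking $v(t)$ and $v(n)$ in dual ways: you need $v\nmid n$ when $v(t)>0$ so that $h$ is a unit, while the paper needs $v(t)=0$ when $v\mid n$ so that the direct reduction works. The paper's treatment of the hard case is slightly slicker; yours is more elementary and more uniform, in that every local symbol is shown to be $1$ by explicitly producing a local square.
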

\begin{proof}
The hypotheses imply $s, t, (C^n - \zeta_r B^{2n})$ are non-zero, and hence also $(C - \zeta_r' B^2) \not= 0$, so the Hilbert symbols used below are well-defined. Note that
  \begin{equation*}
    (t, s(C^{n} - \zeta_{r}B^{2n}))_{v} = 1
  \end{equation*}
  for all \(v \in S_{L}\) because
  \begin{equation*}
  A^{2} = t B^{2n} + s(C^{n} - \zeta_{r} B^{2n})\cdot 1^{2}.   
  \end{equation*}
  Using the fact that $\zeta_r = {\zeta_r'}^n$ and the factorization
  \begin{align}
  \label{factor-ABC}
      C^{n} - \zeta_{r}B^{2n} &= C^n - (\zeta_r' B^2)^n\\
      \notag &= (C - \zeta_{r}' B^{2})(C^{n-1} + C^{n-2} \zeta_{r}' B^{2} + \cdots +  (\zeta_r' B^2)^{n-1})
  \end{align}
  we see that
  \begin{align}\label{eq:hilbert-factor-rec}
    (t, s(C - \zeta'_{r}B^{2}))_{v} = (t, C^{n-1} + C^{n-2} \zeta_{r}' B^{2} + \cdots + (\zeta_r' B^2)^{n-1} )_{v},
  \end{align}
  using Lemma~\ref{hilbert-pairing}.

  Let \(\beta = s(C - \zeta_{r}'B^{2})\).
  By~\eqref{eq:hilbertrec}, we have that
  \begin{equation}
  \label{use-hilbert-reciprocity}
    \prod_{v \in \inftyS{L}} (t, \beta)_{v}\prod_{v \in \evenS{L}} (t,
    \beta)_{v}\prod_{v \in \oddS{L}, v(t) \text{ odd}} (t, \beta)_{v} = \prod_{v \in
        \oddS{L}, v(t) \text{ even}} (t, \beta)_{v}.
    \end{equation}
    Thus it suffices to show that \((t, \beta)_{v} = 1\) when \(v \in
    \oddS{L}\) and \(v(t)\) is even.

    Suppose $v \in \oddS{L}$ and $v(t)$ is even. By~\eqref{eq:odd-hilbert},
    \((t, \beta)_{v} = 1\) when \(v(\beta)\) is even. So suppose that \(v(\beta)\) is odd. 
    
    If \(v(n) > 0\), then we have that \(v(t) = 0\) by assumption. As \( v(\beta) \) is odd, we deduce from \eqref{hilbert-identity} and \eqref{factor-ABC} that
    \begin{equation*}
     v(A^{2} - t B^{2n}) = v(s(C^{n} - \zeta_{r} B^{2n})) \ge v(\beta) > 0
   \end{equation*}
   so that 
   \begin{equation}
   \label{AB-cong}
     A^{2} \equiv t B^{2n} \pmod {\pi_{v}}.
   \end{equation}
   Since \(A, B, C\)
   are coprime, by \eqref{hilbert-identity}, we deduce that $A$ and $B$ are coprime, as $v(A), v(B) > 0$ implies $v(s C^n) > 0$. Since $v(s) = 0$, we see that $v(C) > 0$, contradicting \(A, B, C\) being coprime. Hence, if $v(B) > 0$, then by \eqref{AB-cong} we obtain that $v(A) > 0$, contradicting \(A, B \) being coprime. Thus, $v(B) = 0$ and $B$ is a $v$-adic unit. From \eqref{AB-cong}, we deduce that
   \begin{equation*}
     (A B^{-n})^2 \equiv t \pmod{\pi_v}.
   \end{equation*} 
   and hence
   \begin{equation*}
       \legendre{t}{\pi_v} = 1.
   \end{equation*}
   using also that $v(t) = 0$. Using Lemma~\ref{hilbert-odd}, this leads to \((t, \beta) = 1\). 
   
   If \(v(n) = 0\), then since \(v(\beta)\) is odd and $v(s) = 0$, we firstly have that 
   \begin{equation}
   \label{one-factor}
     v(C - \zeta_r' B^2) = v(\beta) > 0.
   \end{equation}
   We have that $v(C) = 0$ for if $v(C) > 0$, then by \eqref{one-factor} $v(B) > 0$ and hence by \eqref{hilbert-identity} $v(A) > 0$, contradicting that \(A, B, C\) are coprime. It follows that
   \begin{equation*}
       C^{n-1} + \ldots +  (\zeta_r' B^2)^{n-1} \equiv n C^{n-1} \pmod{\pi_v},
   \end{equation*}
   which from the conditions $v(n) = v(C) = 0$ imply that
   \begin{equation*}
     v(C^{n-1} + \ldots +  (\zeta_r' B^2)^{n-1}) = 0.
   \end{equation*}
   Since $v(t)$ is even, using Lemma~\ref{hilbert-odd}, it follows that \((t, C^{n-1} + \ldots + (\zeta_r' B^2)^{n-1})_{v}  = 1\). Hence, by \eqref{eq:hilbert-factor-rec} we obtain
   \begin{equation*}
     (t, \beta)_{v} = (t, C^{n-1} +  \ldots + (\zeta_r' B^2)^{n-1} )_{v} = 1,
  \end{equation*}
  as desired.
\end{proof}

\begin{remark}
\label{hilbert-apply}
In our application, $L = K(\zeta_r)$, $n = p$, $t = 1 - 4 \zeta_r$, $s = 4$, $C = ab, B = c, A = a^p-b^p$ and we have the identity \eqref{hilbert-identity} because of \eqref{eq:claim-fac}. Here, $v(s) > 0$ only for places $v \in
S_L^{\text{even}}$ and $A,B,C$ are coprime. Finally, we require $(p)$ to be coprime to  $(t) = (1 - 4 \zeta_r)$ to ensure the hypothesis $v(t) = 0$ for all $v \in S_L$ such
that $v(n) > 0$.
\end{remark}

We need a few more lemmas to aid the proof of our reciprocity constraint.
\begin{lemma}
\label{hilbert-even}
Let $L$ be a number field containing a primitive $r$th root of unity $\zeta_r$ and $L$ is unramified at $2$. Let $v$ be a place of $L$ above $2$. Then $(1 - 4 \zeta_r,b)_v = 1$  for any $b \in L$ with $v(b) = 0$.
\end{lemma}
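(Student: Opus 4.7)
The plan is to reduce the claim to showing that the (at most quadratic) extension $L_v(\sqrt{1-4\zeta_r})/L_v$ is either trivial or unramified. Once this is known, the result is immediate: by the standard characterization of the Hilbert symbol as the norm-residue symbol, $(1-4\zeta_r,b)_v=1$ iff $b$ is a norm from $L_v(\sqrt{1-4\zeta_r})$, and in an unramified (or trivial) extension of local fields the norm map is surjective on units, so every $b$ with $v(b)=0$ is such a norm.

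The key step is an Artin--Schreier change of variables tailored to the shape $\alpha=1-4\zeta_r$. Set $\beta=(1-\sqrt{\alpha})/2$; expanding $(1-2\beta)^2=\alpha$ gives
\[
  \beta^2-\beta+\zeta_r=0,
\]
so $L_v(\sqrt{\alpha})=L_v(\beta)$ and $\beta$ is a root of the integral polynomial $f(X)=X^2-X+\zeta_r\in\mathcal{O}_v[X]$. Because $L$ is unramified at $2$ by hypothesis, $\pi_v=2$ is a uniformizer of $L_v$ and the residue field $k$ has characteristic $2$; the root of unity $\zeta_r$ is a $v$-adic unit, so reduction gives a well-defined polynomial $\bar f(X)=X^2-X+\bar\zeta_r\in k[X]$.

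Now split into two cases. If $\bar f$ has a root $\bar\beta_0\in k$, then $f'(\bar\beta_0)=2\bar\beta_0-1=-1$ is a unit in $k$, so Hensel's lemma lifts $\bar\beta_0$ to a root $\beta_0\in \mathcal{O}_v$ of $f$; hence $\sqrt\alpha=1-2\beta_0\in L_v$, so $\alpha$ is a square and $L_v(\sqrt\alpha)=L_v$. Otherwise $\bar f$ is irreducible over $k$, so the residue field of $L_v(\beta)$ contains the quadratic extension $k(\bar\beta)$; combined with $[L_v(\beta):L_v]\leq 2$ this forces residue degree equal to total degree, making $L_v(\sqrt\alpha)/L_v$ the unramified quadratic extension. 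Either way the proof is complete.

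I do not anticipate a serious obstacle here: the substitution $\beta=(1-\sqrt\alpha)/2$ is the standard trick that converts a Kummer-type quadratic extension with $\alpha\equiv 1\pmod 4$ into an Artin--Schreier-type extension adapted to residue characteristic $2$, and once we are in that form Hensel's lemma applies cleanly because $f'\equiv -1\not\equiv 0$ in $k$. The mildest point of care is simply to observe that the hypothesis ``$L$ unramified at $2$'' is essential to identify $\pi_v=2$ and to ensure $\bar\zeta_r\in k$ makes sense, both of which are automatic here.
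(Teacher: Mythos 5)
Your proof is correct, but it takes a genuinely different route from the paper's. The paper adapts Serre's argument for $\Q_2$: it first shows via Hensel's lemma applied to $x^2-u$ that any unit $u\equiv 1\pmod 8$ is a square in $L_v$ (this is where the paper uses $v(2)=1$, to get $v(1-u)\ge 3>2=2v(2)$), and then, since $\#k_v^\times$ is odd, chooses $x$ with $bx^2\equiv\zeta_r$ in the residue field so that $(1-4\zeta_r)\cdot 1^2+b(2x)^2\equiv 1\pmod 8$ is a square, giving an explicit nontrivial solution of $z^2=\alpha x^2+by^2$ straight from the definition of the symbol. You instead pass through the norm-residue characterization of $(\alpha,b)_v$ and use the substitution $\beta=(1-\sqrt{\alpha})/2$ to recognize $L_v(\sqrt{1-4\zeta_r})=L_v(\beta)$ with $\beta^2-\beta+\zeta_r=0$; since the reduction $X^2-X+\bar\zeta_r$ is separable in characteristic $2$, the extension is trivial (Hensel) or unramified quadratic, and units are norms from unramified extensions. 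Both arguments are complete. Yours invokes a slightly heavier standard fact (surjectivity of the norm on units in unramified extensions), but it buys something real: it nowhere uses that $L$ is unramified at $2$, so it proves the lemma for an arbitrary place above $2$, whereas the paper's computation as written is tied to $\pi_v=2$. The paper's proof is more elementary and self-contained, producing the representing solution by hand.
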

\begin{proof}
  The proof in \cite[Theorem 1, Chapter III, \S 1.2]{Serre} for the case $L = \Q$ and $\alpha = \beta = 0$ can be adapted to prove this lemma.
  
  If \(u\) and \(b\) are elements in \(L_v^{\times}\) with \(u \equiv 1 \pmod{8}\), then we first show that
  \begin{equation*}
    (u, b)_v = 1
  \end{equation*}
  where the Hilbert symbol is taken in \(L_{v}\). Indeed if \(u \equiv 1 \pmod{8}\), then
  since 
  \begin{equation*}
    v(1^2 - u) \geqslant 3 > 2 = 2 v(2(1)),
  \end{equation*}
  applying Hensel's lemma to \(f(x) = x^{2} - u \), we see that \(u\) is a square in $L_v$, say \(u = a^{2}\). Hence, 
  \begin{equation*}
    a^{2} = u(1)^2 + b(0)^{2}
  \end{equation*}
  so \((u, b)_v = 1\). 
  
  Now let $k_v$ be the residue field of $L_v$ and let $u = 1 - 4 \zeta_r$. Since $k_v^\times$ is odd, we can solve the equation $b x^2 = \zeta_r$ in $k_v^\times$ for $x \in k_v^\times$. Then
  $u(1)^2 + b (2x)^2 \equiv 1 \pmod 8$ and hence $u(1)^2 + b (2x)^2 = a^2$ for some $a  \in L_v$ by the argument above. It follows then that $(u,b)_v = (1 - 4 \zeta_r, b) = 1$.
\end{proof}


The following is a strengthened version of Theorem~\ref{thm:sqrt5recip}, now with no condition on $c$ being coprime to $2 \calO_K$.
\begin{theorem}
\label{reciprocity-main}
Let \(K \) be a number field and let \(L = K(\zeta_{r})\) where \(\zeta_{r}\) be a primitive $r$th root of unity such that $(r,p) = 1$ and $r \not= 2$.
If $r = 1$, assume further that $K$ has an even number of real embeddings.

Suppose $L$ is unramified at $2$, $K$ is totally split at $2$,  $(p)$ is coprime to $(1 - 4 \zeta_r)$, and write $\zeta_{r} = \zeta_r'^p$ where $\zeta_r'$ is a primitive $r$th root of unity. 

If $(a,b,c)$ is a primitive solution over $\calO_K$ to
\begin{equation*}
    a^{p} + b^{p} + c^{p} = 0
\end{equation*}
such that 
\begin{equation}
\label{unity-condition}
  (ab)^p - \zeta_r c^{2p} \not= 0,
\end{equation}  
then one has
\begin{equation}
\label{jacobi}
				\legendreL{ab - c^{2}\zeta'_{r}}{1 - 4\zeta_{r}}
				\neq -1
\end{equation}
for all permutations of $a,b,c$.
\end{theorem}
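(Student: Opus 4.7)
The plan is to apply Theorem~\ref{thm:hilbert-constraint} in the configuration of Remark~\ref{hilbert-apply}, dispose of the infinite and even local factors, and then convert the surviving product of Hilbert symbols into the desired Jacobi symbol via Lemma~\ref{hilbert-odd} together with a short reciprocity calculation for $\legendreL{-1}{1-4\zeta_r}$. Writing $t = 1 - 4\zeta_r$, $\gamma = ab - c^2\zeta_r'$ and $\beta = -4\gamma$, the identity $(a^p - b^p)^2 - (1-4\zeta_r)c^{2p} = -4((ab)^p - \zeta_r c^{2p})$ together with primitivity of $(a,b,c)$ (which forces $A = a^p - b^p$, $B = c$, $C = ab$ to be globally coprime in $\calO_L$) supplies all the hypotheses of Theorem~\ref{thm:hilbert-constraint}: non-vanishing is exactly \eqref{unity-condition}, and coprimality of $(p)$ and $(1-4\zeta_r)$ guarantees $v(t) = 0$ whenever $v(p) > 0$. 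This yields
\[
\prod_{v \in \inftyS{L}} (t, \beta)_v \prod_{v \in \evenS{L}} (t, \beta)_v \prod_{v \in \oddS{L},\, v(t)\text{ odd}} (t, \beta)_v = 1.
\]

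Next I would show that the first two products are trivial. At infinite places: for $r \geq 3$ the field $L$ is totally imaginary (any real embedding would send $\zeta_r$ to $\pm 1$, impossible when $r \geq 3$), so the contribution is automatically $1$; for $r = 1$, Lemma~\ref{totally-neg} gives $\sigma(\gamma) < 0$ at each real embedding $\sigma$ of $K = L$, whence $\sigma(\beta) > 0$ and $(t,\beta)_\sigma = 1$. At an even place $v$ lying above $w \in S_K$, the hypothesis that $K$ is totally split at $2$ gives $\F_w = \F_2$; reducing $a^p + b^p + c^p = 0$ modulo $w$ and using primitivity forces exactly one of $a, b, c$ to vanish at $w$, and in each subcase a direct valuation calculation gives $v(\gamma) = 0$. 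Lemma~\ref{hilbert-even} then gives $(t,\gamma)_v = 1$, while $(t,-4)_v = (t,-1)_v (t,4)_v = 1$ follows from Lemma~\ref{hilbert-even} together with item (1) of Lemma~\ref{hilbert-prop}, so bilinearity yields $(t,\beta)_v = 1$.

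The surviving identity $\prod_{v \in \oddS{L},\, v(t)\text{ odd}} (t, \beta)_v = 1$ I would convert into a Jacobi symbol as follows. If $\gcd(\gamma, t) \neq 1$ in $\calO_L$ then $\legendreL{\gamma}{t} = 0 \neq -1$ and we are done, so assume coprimality. At every odd $v$ with $v(t)$ odd we then have $v(\beta) = 0$, and Lemma~\ref{hilbert-odd} reduces to $(t,\beta)_v = \legendreL{\beta}{\pi_v}$. Because only the parity of $v_\mathfrak{p}(t)$ affects the value of the $\pm 1$-valued factor $\legendreL{\beta}{\mathfrak{p}}^{v_\mathfrak{p}(t)}$, the surviving Hilbert-product identity is exactly $\legendreL{\beta}{t} = 1$; Jacobi multiplicativity and $\legendreL{4}{t} = 1$ then give $\legendreL{\gamma}{t} = \legendreL{-1}{t}$.

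The main obstacle is the final step: showing $\legendreL{-1}{t} = 1$. I would handle this by a separate application of the global reciprocity law to the pair $(-1, t)$. Lemma~\ref{hilbert-even} trivializes the even factors; Lemma~\ref{hilbert-odd} identifies the odd contribution as $\legendreL{-1}{t}$; and at real places (which exist only when $r = 1$) we have $(-1,t)_\sigma = -1$ since both $-1$ and $\sigma(t) = -3$ are negative. The global product formula $\prod_v (-1, t)_v = 1$ then yields $\legendreL{-1}{t} = (-1)^{\#\{\sigma : L \hookrightarrow \R\}}$, which equals $1$ both when $r \geq 3$ (no real embeddings) and when $r = 1$ (since $K$ is assumed to have an even number of real embeddings). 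Combining with the previous step gives $\legendreL{\gamma}{t} = 1$, and because the hypotheses are symmetric in $a, b, c$ the conclusion holds for every permutation.
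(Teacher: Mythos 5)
Your proposal is correct and follows the same overall route as the paper: invoke Theorem~\ref{thm:hilbert-constraint} with $t = 1-4\zeta_r$, $A = a^p-b^p$, $B = c$, $C = ab$, kill the archimedean factors (totally imaginary $L$ for $r\ge 3$; total negativity of $ab-c^2$ for $r=1$), kill the even factors using the splitting of $2$ in $K$ and Lemma~\ref{hilbert-even}, and identify the surviving odd product with the Jacobi symbol via Lemma~\ref{hilbert-odd}. The one genuine difference is that you track the unit factor $s=-4$ carefully: the third product in \eqref{reciprocity-product} involves $(t,-4\gamma)_v$ rather than $(t,\gamma)_v$, and passing between them at the odd places with $v(t)$ odd costs exactly $\legendreL{-1}{1-4\zeta_r}$, which you evaluate to $1$ by a second application of the product formula to the pair $(-1,\,1-4\zeta_r)$ (this is where the even-number-of-real-embeddings hypothesis enters your argument). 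The paper's write-up passes directly from ``the third product is $1$'' to ``$\legendreL{ab-c^2\zeta_r'}{1-4\zeta_r}=1$'' without isolating this factor, so your version supplies a step that the paper leaves implicit; correspondingly, your treatment of the real places for $r=1$ is simpler (each factor $(t,-4(ab-c^2))_\sigma$ is individually $1$ since $-4(ab-c^2)>0$), with the parity hypothesis resurfacing only in the $\legendreL{-1}{t}$ computation rather than in the count of $(-3,ab-c^2)_\sigma=-1$ factors. Both arguments are sound; yours is the more airtight accounting of signs.
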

\begin{proof}
Since $1 - 4 \zeta_r$ is coprime to $2 \calO_L$, we have from the definition of Jacobi (see \cite[Definition 8.2]{neukirch} for instance) symbol that
\begin{equation}
    \legendreL{ab - c^{2}\zeta'_{r}}{1 - 4\zeta_{r}} = \begin{cases}
        0 & \text{ if } (ab - c^2 \zeta_r') \text{ is not coprime to } (1  - 4 \zeta_r), \\
        \prod_{v \in \oddS{L}, v(1 - 4 \zeta_r) \text{ odd}} \legendre{ab-c^2 \zeta_r'}{\pi_v}^{v(1-4\zeta_r)} & \text{ otherwise},
    \end{cases}
\end{equation}
noting the product is by definition over the $v \in \oddS{L}$, but only the terms  with $v(1-4\zeta_r)$ odd can be $-1$.

Using Lemma~\ref{hilbert-odd}, we see that
\begin{equation*}
    \legendre{ab-c^2 \zeta_r'}{\pi_v}^{v(1-4\zeta_r)} = (1 - 4 \zeta_r, ab-c^2 \zeta_r')_v
\end{equation*}
if $v \in \oddS{L}$, $v(1-4\zeta_r)$ is odd, and $v(ab-c^2\zeta_r') = 0$. Hence, we obtain that
\begin{equation}
    \legendreL{ab - c^{2}\zeta'_{r}}{1 - 4\zeta_{r}} = \begin{cases}
        0 & \text{ if } (ab - c^2 \zeta_r') \text{ is not coprime to } (1  - 4 \zeta_r), \\
        \prod_{v \in \oddS{L}, v(1-4\zeta_r) \text{ odd}} (1 - 4 \zeta_r, ab - c^2 \zeta_r')_v & \text{ otherwise.}
    \end{cases}
\end{equation}

As $(a,b,c)$ is primitive, for any $v \in \evenS{L}$, two of the elements $a,b,c$ are units in $\calO_{L,v}$ and remaining element has positive valuation, since $a, b, c \in \calO_K$ and $K$ is totally split at $2$. This implies that
\begin{equation*}
   ab - c^2 \zeta_r'
\end{equation*}
is coprime to $2 \calO_L$. By Lemma~\ref{hilbert-even}, it follows that
\begin{equation*}
    (1 - 4 \zeta_r, ab - c^2 \zeta_r')_v = 1
\end{equation*}
for all $v \in \evenS{L}$ as we are assuming $L$ is unramified at $2$.

If $r \ge 3$, then $L$ is totally complex and we have that $(1-4 \zeta_r, ab - c^2 \zeta_r')_v = 1$ for all $v \in S_L^\infty$.

If $r = 1$, then $L = K$ is a number field with an even number of real embeddings by hypothesis. Thus, using Lemma~\ref{fermat-real-r1}, we obtain that $$\prod_{v \in S_K^\infty} (-3, ab-c^2)_v = 1$$
as the Hilbert symbols which are $-1$ occur in total an even number of times in the above product.

Thus, using Theorem~\ref{thm:hilbert-constraint}, we see the third product in \eqref{reciprocity-product} is $1$, which implies that
$$\legendreL{ab - c^{2}\zeta'_{r}}{1 - 4\zeta_{r}} = 1.$$
The hypothesis \eqref{unity-condition} ensures the quantity in \eqref{hilbert-identity} is non-zero.

\end{proof}

\begin{remark}
 The constraint in \eqref{reciprocity-product} of Theorem~\ref{thm:hilbert-constraint} is an example of a Brauer-Manin obstruction. Let $\mathcal{A}$ be the Azumaya algebra given by the Hilbert symbol $(t, s (C - \zeta_r' B^2))$ on the variety defined by \eqref{hilbert-identity}, subject to $A, B, C$ coprime and $C^n - \zeta_r B^{2n} \not= 0$. Then \eqref{use-hilbert-reciprocity} which is used in the proof of \eqref{reciprocity-product} corresponds to the condition
 \begin{equation*}
     \sum_v \text{inv}_v \mathcal{A}(x_v) = 0
 \end{equation*}
 of the first formula in the introduction of \cite{bright}.
\end{remark}

\section{Essential obstructive S-unit solutions}

Let $K = \Q(\sqrt{5})$ or $K = \Q(\sqrt{17})$ and $S$ be the set of primes of $K$ above $2$.

Suppose $(a,b,c) \in \calO_K^3$ is a non-trivial primitive solution to
\begin{equation}
\label{abc-equ}
    a + b + c = 0.
\end{equation}
If the Frey curve 
\begin{equation*}
    E_{a,b,c} : y^2 = x(x-a)(x+b)
\end{equation*}
is an elliptic curve over $K$ with conductor $N = 2^3 \calO_K$ for $K = \Q(\sqrt{5})$ and conductor $N = 2 \calO_K$ for $K = \Q(\sqrt{17})$, then the triple $(a,b,c)$ poses an obstruction to the modular method for solving FLT over $K$. If we have such a triple $(a,b,c)$, then multiplication by the square of a unit in $\calO_K$ produces another such triple with $E_{a,b,c}$ in the same isomorphism class over $K$.

Given such a triple $(a,b,c)$ as above, it can be seen that division by any one of $a,b,c$ in \eqref{abc-equ} gives a solution to the $S$-unit equation
\begin{equation}
\label{S-unit-equ}
    U + V = 1
\end{equation}
where $U, V \in K$ and $v(U), v(V) = 0$ for all $v \notin S$. However, a solution to the $S$-unit equation may not give rise to a triple $(a,b,c)$ whose associated $E_{a,b,c}$ has the required conductor $N$, so not all $S$-unit solutions are relevant as obstructions to the modular method.

In this section, we list the $S$-unit solutions which give rise to elliptic curves over $K$ with conductor $N$. This serves two purposes: firstly it gives a more detailed description of the obstructions to solving FLT over these quadratic fields, and secondly it gives a double check on the computations used to prove the main results of this paper.

The direct computation of $S$-units for $\Q(\sqrt{5})$ can be done in a few minutes in ${\tt SageMath}$. However, a direct computation of $S$-units for $\Q(\sqrt{17})$ using {\tt SageMath} is slow and does not appear to terminate in any reasonable amount of time. We explain instead an alternate method of determining these particular $S$-unit solutions using known lists of elliptic curves over $K$ with these conductors \cite{lmfdb}.

Firstly, recall if we have two elliptic curves $E_1$ and $E_2$ over $K$,  given in the following form,
\begin{align*}
    E_1 : y_1^2 = f_1(x_1), \\
    E_2 : y_2^2 = f_2(x_2),
\end{align*}
where $f_i \in \calO_K[x]$ are monic of degree $3$, then $E_1$ is isomorphic to $E_2$ over $K$ if and only if there exist $u, \beta \in \calO_K$ and a change of variables
\begin{align}
    \label{transform-1} y_2 & = u^3 y_2 \\
    \label{transform-2} x_2 & = u^2 x_1 + \beta \\
    \label{transform-3} \Delta(E_2) & = u^{12} \Delta(E_1)
\end{align}

Let $(a,b,c) \in \calO_K^3$ be a non-trivial primitive solution to 
\begin{equation}
    a + b + c = 0.
\end{equation}
Then the `essential' obstructive solutions $(a,b,c)$ we are seeking have the property that the Frey elliptic curve
\begin{equation}
    \label{frey-curve}
    E_0 = E_{a,b,c} : y^2 = x(x-a)(x+b)
\end{equation}
is isomorphic to one of the finitely many elliptic curves $E$ defined over $K$ of conductor equal to the possible Artin conductors $N$ of the residual representations $\rhobar_{E_0,p}$. Due to primitivity, \eqref{frey-curve} is semi-stable at primes $\Fq$ of $K$ coprime to $2 \calO_K$, and hence already a local minimal Weierstrass model at such $\Fq$.

Suppose $E$ is first put into global minimal Weierstrass form
\begin{equation}
    E: y^2 + a_1 xy + a_3 y = x^3 + a_2 x^2 + x_4 x + a_6. 
\end{equation}
By completing the square, we may transform to a model for $E$ of the form
\begin{equation}
    E : y^2 = f(x)
\end{equation}
where $f \in \calO_K[x]$ is monic of degree $3$, and this model is a local minimal Weierstrass model at all primes $\Fq$ of $K$ coprime to $2 \calO_K$. Let $e_1, e_2, e_3$ be the roots in $K$ of $f$.

By \eqref{transform-3}, we deduce that $u$ is a $S$-unit in $\calO_K$, by definition of local minimal Weierstrass model (i.e.\ a Weierstrass model over $\calO_{K_\Fq}$ whose discriminant has minimal valuation). Also, by \eqref{transform-2}, we see that
\begin{align*}
    & u^2 e_1 + \beta = 0 \\
    & u^2 e_2 + \beta = a \\
    & u^2 e_3 + \beta = -b,
\end{align*}
or equivalently
\begin{align}
  a & =  u^2 (e_2 - e_1) \\
  b & = -u^2 (e_3 - e_1).
\end{align}
Set $c = - a - b$. We find an obstructive solution $(a,b,c)$ if and only there is a $S$-unit $u$ of $K$ such that $a$ and $b$ are coprime ($a,b,c$ mutually coprime implies $a,b,c$ pairwise coprime). If we find such a triple $(a,b,c)$, it is unique up to multiplication by the square of a unit in $\calO_K$.

We have determined the list of `essential' obstructive solutions for both $\Q(\sqrt{5})$ and $\Q(\sqrt{17})$, which can be found in the electronic resources for this paper \cite{programs}. For those congruence classes of exponents $p$ where we obtain a result, all obstructive solutions are eliminated by the reciprocity constraints. For those congruence classes of exponents $p$ where we don't obtain a result, there is some obstructive solution that is not eliminated by the reciprocity constraints.

\bibliographystyle{acm}
\bibliography{main}
\end{document}